\documentclass[11pt,leqno]{article}
\usepackage{amsmath, amscd, amsthm, amssymb, graphics, xypic, mathrsfs, setspace, fancyhdr, bm, pdfsync, enumitem, mathptmx}
\usepackage[usenames, dvipsnames, svgnames, table]{xcolor}
\usepackage[letterpaper,top=1.05in, bottom=1.05in, left=1.05in, right=1.05in]{geometry}
\usepackage[colorlinks=true,pagebackref=true]{hyperref}
\hypersetup{backref}


\newcommand{\colim}{\operatorname{colim}}

\newcommand{\Spec}{\operatorname{Spec}}

\newcommand{\isomto}{{\stackrel{\sim}{\;\longrightarrow\;}}}
\newcommand{\isomt}{{\stackrel{{\scriptscriptstyle{\sim}}}{\;\rightarrow\;}}}

\newcommand{\sma}{{\scriptstyle{\wedge}\,}}

\newcommand{\GW}{\mathbf{GW}}

\renewcommand{\hom}{\operatorname{Hom}}

\newcommand{\PP}{\mathbb{P}}

\newcommand{\cplx}{{\mathbb C}}

\newcommand{\Z}{{\mathbb Z}}
\newcommand{\N}{{\mathbb N}}
\newcommand{\A}{{\mathbb A}}

\newcommand{\aone}{{\mathbb A}^1}
\newcommand{\pone}{{\mathbb P}^1}

\newcommand{\gm}[1]{{{\mathbb G}_{m}^{#1}}}

\newcommand{\ho}[2][]{\mathrm{H}_{#1}({#2})}

\newcommand{\bpi}{\bm{\pi}}
\newcommand{\piaone}{{\bpi}^{\aone}}

\newcommand{\Nis}{{\operatorname{Nis}}}
\newcommand{\Zar}{\operatorname{Zar}} 

\newcommand{\aff}{\operatorname{aff}}

\newcommand{\Sm}{\mathrm{Sm}}

\newcommand{\Spc}{\mathrm{Spc}}

\newcommand{\K}{{{\mathbf K}}}

\newcommand{\KM}{\K^M}

\newcommand{\Singaone}{\operatorname{Sing}^{\aone}\!\!}
\newcommand{\op}[1]{\operatorname{#1}}

\renewcommand{\setminus}{\smallsetminus}

\newcommand{\Addresses}{{
		\bigskip
		\footnotesize
		
		A.~Asok, Department of Mathematics, University of Southern California, 3620 S.~Vermont Ave., Los Angeles, CA 90089-2532, United States; \textit{E-mail address:} \url{asok@usc.edu}
		
		\medskip
		
		J.~Fasel, Institut Fourier - UMR 5582, Universit\'e Grenoble Alpes, 100, rue des math\'ematiques, F-38402 Saint Martin d'H\`eres; France \textit{E-mail address:} \url{jean.fasel@univ-grenoble-alpes.fr}
}}

\newcounter{intro}
\setcounter{intro}{1}

\theoremstyle{plain}
\newtheorem{thm}{Theorem}[subsection]

\newtheorem{lem}[thm]{Lemma}

\newtheorem*{claim*}{Claim} 

\newtheorem{question}[thm]{Question}

\newtheorem{conj}[thm]{Conjecture}
\newtheorem*{thm*}{Theorem}
\newtheorem*{problem*}{Problem}

\newtheorem{thmintro}{Theorem}

\newtheorem{conjintro}[thmintro]{Conjecture}

\theoremstyle{definition}
\newtheorem{defn}[thm]{Definition}

\theoremstyle{remark}
\newtheorem{rem}[thm]{Remark}

\newtheorem{ex}[thm]{Example}

\newtheorem{entry}[thm]{}

\numberwithin{equation}{subsection}

\begin{document}
	\pagestyle{fancy}
	\renewcommand{\sectionmark}[1]{\markright{\thesection\ #1}}
	\fancyhead{}
	\fancyhead[LO,R]{\bfseries\footnotesize\thepage}
	\fancyhead[LE]{\bfseries\footnotesize\rightmark}
	\fancyhead[RO]{\bfseries\footnotesize\rightmark}
	\chead[]{}
	\cfoot[]{}
	\setlength{\headheight}{1cm}

	\author{Aravind Asok\thanks{AA was partially supported by NSF Awards DMS-1802060  and DMS-2101898.} \and Jean Fasel}
	
	\title{{\bf Vector bundles on algebraic varieties}}
	\date{}
	\maketitle
	
	\begin{abstract}
		We survey some recent progress in the theory of vector bundles on algebraic varieties and related questions in algebraic K-theory.
	\end{abstract}


\section{Introduction}
The celebrated Poincar\'e--Hopf theorem implies that the vanishing locus of a suitably generic vector field on a closed, smooth manifold $M$ is topologically constrained: the number of points at which a generic vector field vanishes is equal to the Euler characteristic of $M$.  More generally, one may ask: given a vector bundle $E$ on a compact smooth manifold, what sorts of constraints are present on the topology of vanishing loci of generic sections?  If $M$ is a connected, closed, smooth manifold of dimension $d$ and $E$ is a rank $r$ vector bundle on $M$, then by the corank of $E$ we will mean the difference $d - r$.  The classical work of Eilenberg, Stiefel, Steenrod and Whitney laid down the foundations for results restricting the topology of vanishing loci of generic sections for bundles of a fixed corank; these results appear, essentially in modern form in Steenrod's book \cite{Steenrod}.  For example, one knows that if the corank of $E$ is negative, then $E$ admits a nowhere vanishing section and if the corank of $E$ is $0$, then a generic section vanishes at a finite set of points, and the cardinality of that finite set is determined by purely cohomological data (the Euler class of $E$ and the corresponding Euler number of $E$).  The situation becomes more interesting when the corank of $E$ is positive, to which we will return momentarily.  
 
In the mid 1950s, Serre created a dictionary between the theory of vector bundles in topology and the theory of projective modules over a commutative ring \cite{SerreFAC,SerreMP}. Echoing M.M. Postnikov's MathSciNet review of Serre's paper,
J.F. Adams prosaically wrote in his review of H. Bass' paper \cite{Bass}: ``This leads to the following programme: take definitions, constructions and theorems from bundle-theory; express them as particular cases of definitions, constructions and statements about finitely-generated projective modules over a general ring; and finally, try to prove the statements under suitable assumptions''.  One of the results Serre presented to illustrate this dictionary was the algebro-geometric analog of existence of nowhere vanishing sections for negative corank projective modules, now frequently referred to as Serre's splitting theorem, which we recall in algebro-geometric formulation: if $\mathcal{E}$ is a rank $r$ vector bundle over a Noetherian affine scheme $X$ of dimension $d$, then when $r > d$, $\mathcal{E} \cong \mathcal{E}' \oplus \mathcal{O}_{X}$.  

After the Pontryagin--Steenrod representability theorem, topological vector bundles on smooth manifolds (or spaces having the homotopy type of a CW complex) can be analyzed using homotopy theoretic techniques.  Extending Serre's analogy further and using celebrated work of Bass, Quillen, Suslin and Lindel, F. Morel showed that algebraic vector bundles on smooth affine varieties could be studied using an algebro-geometric homotopy theory: the Morel--Voevodsky motivic homotopy theory.    In this note, we survey recent developments in the theory of algebraic vector bundles motivated by this circle of ideas, making sure to indicate the striking analogies between topology and algebraic geometry.  

To give the reader a taste of the methods we will use, we mention two results here.   First, we state an improvement of Serre's splitting theorem mentioned above (for the moment it suffices to know that $\aone$-cohomological dimension is bounded above by Krull dimension, but can be strictly smaller).  Second, we will discuss the splitting problem for projective modules in corank $1$, which goes beyond any classical results.

\begin{thmintro}
	\label{thm:motivicSerre}
If $k$ is a field, and $X$ is a smooth affine $k$-scheme of $\aone$-cohomological dimension $\leq d$, then any rank $r > d$ bundle splits off a trivial rank $1$ summand.
\end{thmintro}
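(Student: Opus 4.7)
The plan is to translate the splitting question into a lifting problem in the $\aone$-homotopy category and run obstruction theory along the Postnikov tower of an explicit fiber.

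By Morel's representability theorem, for $X$ smooth affine over $k$ and $r \geq 2$, there is a natural bijection between the set of isomorphism classes of rank $r$ vector bundles on $X$ and the $\aone$-homotopy classes $[X, B\GL_r]_{\aone}$. Under this bijection, a rank $r$ bundle $\mathcal{E}$ splits off a trivial line bundle if and only if its classifying map $\xi_{\mathcal{E}} \colon X \to B\GL_r$ lifts along the stabilization map $B\GL_{r-1} \to B\GL_r$. The first step is therefore to reformulate the splitting problem as the existence of this lift.

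Next, I would identify the $\aone$-homotopy fiber of $B\GL_{r-1} \to B\GL_r$ with $\GL_r/\GL_{r-1} \simeq \A^r \setminus \{0\}$. Invoking Morel's unstable $\aone$-connectivity calculations, $\A^r \setminus \{0\}$ is $(r-2)$-connected in the $\aone$-sense, with first nontrivial $\aone$-homotopy sheaf $\bpia_{r-1}(\A^r \setminus \{0\}) \cong \KMW_r$ (possibly twisted by $\det \mathcal{E}$, which is immaterial for the vanishing argument to follow). Crucially, all higher sheaves $\bpia_n(\A^r \setminus \{0\})$ are strictly $\aone$-invariant, so their Nisnevich cohomology represents maps into the corresponding Eilenberg--MacLane objects in the $\aone$-local category. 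Running the Postnikov tower of the fiber, the obstructions to inductively lifting $\xi_{\mathcal{E}}$ from stage $n-1$ to stage $n$ live in
\[ H^{n+1}_{\Nis}\bigl(X,\, \bpia_n(\A^r \setminus \{0\})\bigr), \qquad n \geq r-1. \]
By hypothesis, $X$ has $\aone$-cohomological dimension at most $d$, meaning Nisnevich cohomology with coefficients in any strictly $\aone$-invariant sheaf vanishes above degree $d$. Since $r > d$ forces $n+1 \geq r > d$ for every $n \geq r-1$, every obstruction group is zero, the lift exists, and unwinding the bijection produces the desired splitting $\mathcal{E} \cong \mathcal{E}' \oplus \OO_X$.

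The main obstacle is marshalling the obstruction-theoretic apparatus in precisely the right form. Three nontrivial inputs are needed: (i) Morel's $\aone$-representability theorem for vector bundles, relying on Bass--Quillen--Lindel-type extension results to show that Nisnevich-locally trivial bundles on $X$ are insensitive to passage to $X \times \aone$; (ii) the identification of the first nontrivial $\aone$-homotopy sheaf of $\A^r \setminus \{0\}$, together with strict $\aone$-invariance of \emph{all} higher sheaves, both consequences of Morel's unstable and stable $\aone$-connectivity theorems; and (iii) an obstruction theory with twisted (local) coefficients robust enough to handle the failure of $\aone$-simple connectivity of $B\GL_r$, so that the obstruction sheaves are correctly interpreted as twists by $\det \mathcal{E}$. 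Once these foundational pieces are in place, the proof reduces to the transparent cohomological vanishing afforded by the dimension hypothesis, and the small-rank cases ($r=1$, trivially, and $r=2$, where Morel's bijection requires extra care) must be inspected separately.
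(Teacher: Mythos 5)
Your proof is correct and follows essentially the same route as the paper: reformulate splitting as lifting $\xi_{\mathcal{E}}\colon X \to \op{BGL}_r$ along the stabilization map $\op{BGL}_{r-1}\to\op{BGL}_r$, identify the $\aone$-homotopy fiber with $\A^r\setminus 0$, note that it is $\aone$-$(r-2)$-connected by Morel, and observe that the Moore--Postnikov obstructions live in Nisnevich cohomology of degree $\geq r > d$ with strictly $\aone$-invariant coefficients, hence vanish by the $\aone$-cohomological-dimension hypothesis. The only thing worth noting is that you actually supply more detail than the paper's one-paragraph proof (explicitly locating obstruction groups, flagging the determinant twist and the $r\leq 2$ representability edge cases); the paper's version simply cites the connectivity of $\A^r\setminus 0$ and the dimension bound and leaves the bookkeeping implicit.
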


\begin{conjintro}
	\label{conj:murthy}
Assume $k$ is an algebraically closed field, and $X = \Spec R$ is a smooth affine $k$-variety of dimension $d$.  A rank $d-1$ vector bundle $\mathcal{E}$ on $X$ splits off a free rank $1$ summand if and only if $0 = c_{d-1}(\mathcal{E}) \in CH^{d-1}(X)$.
\end{conjintro}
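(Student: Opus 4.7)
The ``only if'' direction is immediate: if $\mathcal{E}\cong\mathcal{E}'\oplus\OO_X$ with $\mathcal{E}'$ of rank $d-2$, then $c_{d-1}(\mathcal{E})=c_{d-1}(\mathcal{E}')=0$, since the top Chern class of a bundle of rank strictly below $d-1$ vanishes. The real content is the converse, which I would attack by recasting the splitting question as an $\aone$-homotopy lifting problem, in the spirit of Morel's proof of Theorem~\ref{thm:motivicSerre}. By Morel's $\aone$-representability, rank $r$ vector bundles on a smooth affine $X$ are classified by $[X, B\GL_r]_{\aone}$, and $\mathcal{E}$ splits off a free rank one summand if and only if its classifying map lifts along $B\GL_{d-2}\to B\GL_{d-1}$. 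Since this morphism is an $\aone$-fibration with fiber the motivic sphere $\aone^{d-1}\setminus 0$, the splitting problem becomes Postnikov obstruction theory for this fiber.

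Morel's computation identifies the first non-trivial $\aone$-homotopy sheaf as $\bpia_{d-2}(\aone^{d-1}\setminus 0)\cong\KMW_{d-1}$, so the primary obstruction is the motivic Euler class
\[
e(\mathcal{E})\in H^{d-1}_{\Nis}\!\bigl(X,\KMW_{d-1}(\det\mathcal{E}^{\vee})\bigr)\;=\;\widetilde{CH}^{d-1}\!\bigl(X,\det\mathcal{E}^{\vee}\bigr).
\]
Over an algebraically closed $k$, I would show that the natural forgetful map from this Chow--Witt group to $CH^{d-1}(X)$ sends $e(\mathcal{E})$ to $\pm c_{d-1}(\mathcal{E})$ and is injective on the class of $e(\mathcal{E})$. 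The key leverage is that algebraic closedness collapses Witt-theoretic invariants---ultimately because $W(k)=\Z/2$---so the discrepancy between the Chow and Chow--Witt Euler classes is controlled by global sections of sheaves of the form $I^{j}/I^{j+1}$, which should be forced to vanish in this range. Granting this, the hypothesis $c_{d-1}(\mathcal{E})=0$ annihilates the primary obstruction and the classifying map lifts past the first non-trivial stage of the Postnikov tower.

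After that first lift, one is left with a secondary obstruction in $H^{d}_{\Nis}(X,\bpia_{d-1}(\aone^{d-1}\setminus 0))$ modulo the indeterminacy generated by the action of $[X,\aone^{d-1}\setminus 0]_{\aone}$ on the set of first-stage lifts. The sheaf $\bpia_{d-1}(\aone^{d-1}\setminus 0)$ is accessible through the motivic EHP sequence and is built from Milnor K-theory mod~$2$ together with $\eta$-torsion summands; over an algebraically closed base one hopes that Gersten-type vanishing in top Nisnevich cohomological degree kills most of $H^d$ and that the surviving pieces are absorbed by the transitive action of the first-stage indeterminacy, which geometrically corresponds to modifying a candidate nowhere vanishing section by self-maps of the motivic sphere. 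The main obstacle is precisely this secondary obstruction: no analogue of Morel's clean primary computation is currently available in the relevant twist, and a genuinely new input---perhaps a direct geometric production of the splitting from a representative of $c_{d-1}(\mathcal{E})$ by a complete intersection, after Mohan~Kumar and Murthy---may ultimately be needed to circumvent the higher $\aone$-obstructions altogether.
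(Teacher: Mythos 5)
Your proposal correctly identifies the overall strategy the paper actually employs: recast the splitting problem as a lifting problem along $\op{BGL}_{d-2}\to\op{BGL}_{d-1}$ with $\aone$-fiber $\A^{d-1}\setminus 0$, use Morel's affine representability (Theorem~\ref{thm:affinerepresentabilityVB}), and run $\aone$--Moore--Postnikov obstruction theory (Paragraph~\ref{entry:aoneMoorePostnikov}). Your identification of the primary obstruction as the Chow--Witt Euler class, and the observation that over $\bar{k}$ it should be controlled by $c_{d-1}\in CH^{d-1}(X)$, is also exactly the paper's move (see the discussion after Theorem~\ref{thm:proofofmurthy}). And you are right that this is genuinely a conjecture: the paper itself only establishes it for $d=3,4$ (char $\neq 2$) and reduces the general case to Conjecture~\ref{conj:unzero}.

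Where your sketch goes off course is in the handling of the secondary obstruction. You suggest that ``Gersten-type vanishing in top Nisnevich cohomological degree'' plus the indeterminacy action should kill the obstruction group, with the fallback of a geometric complete-intersection construction. Neither of these is the mechanism used. First, since $X$ has Nisnevich cohomological dimension $\leq d$, there are no ``higher $\aone$-obstructions'' beyond the secondary one, so no fallback construction is needed for that reason. Second, Gersten-type resolutions do not make $H^d_{\Nis}(X,\mathbf{F})$ vanish for strictly $\aone$-invariant $\mathbf{F}$; this group is computed by the top of the Gersten complex and is typically nonzero. The actual argument in the paper (and its references \cite{AFBundle,AFpi3a3minus0}) is: via Theorem~\ref{thm:pinanminus0} one identifies the image of the secondary obstruction as a coset in $\mathrm{Ch}^{d}(X)/(Sq^2 + c_1(\mathcal{E}) \cup)\mathrm{Ch}^{d-1}(X)$ with $\mathrm{Ch}^i = CH^i/2$ --- exactly parallel to Liao's topological description in Section~\ref{subsec:corank} --- and then invokes the \emph{divisibility} of $CH^d(X)$ for a smooth affine variety of dimension $d$ over an algebraically closed field, which forces $\mathrm{Ch}^d(X)=0$. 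That divisibility theorem, not Gersten vanishing, is the decisive geometric input you are missing. Finally, passing from this $\mathrm{Ch}^d$-description to the full obstruction group $H^d(X,\bpi_{d-1}^{\aone}(\A^{d-1}\setminus 0))$ is precisely what requires understanding the unstable summand $\mathbf{U}_d$ of Theorem~\ref{thm:pinanminus0}; this is where the computation is complete only for $d=3,4$ and Conjecture~\ref{conj:unzero} enters in general.
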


In Theorem~\ref{thm:proofofmurthy} we verify Conjecture~\ref{conj:murthy} in case $d = 3,4$ (and $k$ has characteristic not equal to $2$).  To motivate the techniques used to establish these results we begin by analyzing topological variants of these conjectures.  We close this note with a discussion of joint work with Mike Hopkins which addresses the difficult problem of constructing interesting low rank vector bundles on ``simple" algebraic varieties.  As with any survey, this one reflects the biases and knowledge of the authors.  Limitations of space have prevented us from talking about a number of very exciting and closely related topics.

\section{A few topological stories}
\label{sec:topologicalstory}
In this section, we recall a few topological constructions that elucidate the approaches we use to analyze corresponding algebro-geometric questions studied later.  

\subsection{Moore--Postnikov factorizations}
\label{subsec:MoorePostnikov}
Suppose $f: E \to B$ is a morphism of pointed, connected topological spaces having the homotopy type of CW complexes that induces an isomorphism of fundamental groups (for simplicity of discussion).  Write $F$ for the ``homotopy'' fiber of $f$, so that there is a fiber sequence
\[
F \longrightarrow E \stackrel{f}{\longrightarrow} B
\] 
yielding a long exact sequence relating the homotopy of $F$, $E$ and $B$.  

A basic question that arises repeatedly is given a map $M \to B$, when can it be lifted along $f$ to a map $M \to E$? To approach this problem, one method is to factor $f$ in such a way as to break the original lifting problem into simpler problems where existence of a lift can be checked by, say, cohomological means.  

One systematic approach to analyzing this question was laid out in the work of Moore--Postnikov.  In this case, one factors $f$ so as to build $E$ out of $B$ by sequentially adding higher homotopy of $f$ (keeping track of the induced action of $\pi_1(E) \cong \pi_1(B)$ on the fiber).  In more detail, the Moore--Postnikov tower of $f$ consists of a sequence of spaces $\tau_{\leq i}f$, $i \geq 0$ and morphisms fitting into the following diagram:
\begin{equation}
		\label{eqn:MoorePostnikovdiagram}
\xymatrix{
	& & E \ar[d] \ar[dl]\ar[dr]& & \\
	\cdots \ar[r] & \tau_{\leq i+1} f \ar[r]\ar[dr]& \tau_{\leq i} f \ar[d] \ar[r]& \tau_{\leq i-1} f\ar[dl] \ar[r]& \cdots\\
	&&B. && 
}
\end{equation}
The key properties of this factorization are that i) the composite maps $E \to \tau_{\leq i}f \to B$ all coincide with $f$, ii) the maps $E \to \tau_{\leq i} f$ induce isomorphisms on homotopy groups in degrees $\leq i$, iii) the maps $\tau_{\leq i}f \to B$ induce isomorphisms on homotopy in degrees $> i+1$ and iv) there is a homotopy pullback diagram of the form
\begin{equation}
	\label{eqn:twistedprincipalfibration}
\xymatrix{
	\tau_{\leq i} f \ar[r]\ar[d] & \mathrm{B}\pi_1(E) \ar[d] \\
	\tau_{\leq i-1} f \ar[r] & K^{\pi_1(E)}(\pi_{i}(F),i+1).
}
\end{equation}
In particular, the morphism $\tau_{\leq i} f \to \tau_{\leq i-1}f$ is a twisted principal fibration, which means that a morphism $M \to \tau_{\leq i-1}f$ lifts along the tower if and only if the composite $M \to K^{\pi_1(E)}(\pi_{i}(F),i+1)$ lifts to $\mathrm{B}\pi_1(E)$.  The latter map amounts to a cohomology class on $M$ with coefficients in a local coefficient system; this cohomology class is pulled back from a ``universal example" the $k$-invariant at the corresponding stage.  If the obstruction vanishes, a lift exists.  Lifts are not unique in general but the ambiguity in choice of a lift can also be described.

\subsection{The topological splitting problem}
\label{subsec:corank}
In this section, to motivate some of the algebro-geometric results we will describe later, we review the problem of deciding whether a bundle of corank $0$ or $1$ on a closed smooth manifold $M$ of dimension $d+1$ has a nowhere vanishing section.  We now phrase this problem as a lifting problem of the type described in the preceding section.  

In this case, the relevant lifting problem is:
\[
\xymatrix{
	&  \mathrm{B}O(d-1) \ar[d]^{f} \\
	M \ar[r]_{\varphi}\ar@{-->}[ur]^{\exists ?} &  \mathrm{B}O(d).
}
\]
To analyze the lifting problem, we describe the Moore--Postnikov factorization of $f$.  The homotopy fiber of $f$ coincides with the standard sphere $S^{d-1} \cong O(d)/O(d-1)$.  

The stabilization map $O(d-1) \to O(d)$ is compatible with the determinant, and there are thus induced isomorphisms $\pi_1(\mathrm{B}O(d-1)) \to \pi_1(\mathrm{B}O(d)) \cong \Z/2$ compatible with $f$.  Note, however, that the action of $\Z/2$ on the higher homotopy of $\mathrm{B}O(d)$ depends on the parity of $d$: when $d$ is odd the action is trivial, while if $d$ is even the action is non-trivial in general and even fails to be nilpotent.  Of course $S^{d-1}$ is $(d-2)$-connected.  

\begin{rem}
At this stage, the fact that bundles of negative corank on spaces have the homotopy type of a CW complex of dimension $d$ follows immediately from obstruction theory granted the assertion that the sphere $S^r$ is an $(r-1)$-connected space in conjunction with the fact that negative corank means $r > d$.
\end{rem}

In order to write down obstructions, we need some information about the homotopy of spheres: the first non-vanishing homotopy group of $S^{d-1}$ is $\pi_{d-1}(S^{d-1})$ which coincides with $\Z$ for all $d \geq 2$ (via the degree map).  Likewise, $\pi_{d}(S^{d-1})$ is $\Z$ if $d = 3$ and $\Z/2$ if $d > 3$ and is generated by a suitable suspension of the classical Hopf map $\eta: S^3 \to S^2$.

Assume now $X$ is a space having the homotopy type of a finite CW complex of dimension $d+1$ for some fixed integer $d \geq 2$ (to eliminate some uninteresting cases) and $\xi: X \to \mathrm{B}O(d)$ classifies a rank $d$ vector bundle on $X$.  The first non-zero $k$-invariant for $f$ yields a map $X \to K^{\Z/2}(\pi_{d-1}(S^{d-1}),d)$, i.e., an element
\[
e(\xi) \in H^{d}(X,\Z[\sigma])
\]
called the (twisted) Euler class, where $\Z[\sigma]$ is $\Z$ twisted by the orientation character $\sigma$ defined by applying $\pi_1$ to the morphism $X \to \mathrm{B}O(d) \to \mathrm{B}(\Z/2)$.  

Assuming this primary obstruction vanishes, one may choose a lift to the next stage of the Postnikov tower.  If we fix a lift, then there is a well-defined secondary obstruction to lifting, that comes from the next $k$-invariant: this obstruction is given by a map $X \to K^{\Z/2}(\pi_d(S^{d-1}),d+1)$, i.e. a cohomology class in $H^{d+1}(X,\Z[\sigma])$ if $d=3$ or $H^{d+1}(X,\Z/2)$ if $d\neq 3$; in the latter case the choice of orientation character no longer affects this cohomology group.  

If one tracks the effect of choice of lift on the obstruction class described above, one obtains a map $K^{\Z/2}(\pi_{d-1}(S^{d-1}),d-1) \to K^{\Z/2}(\pi_d(S^{d-1}),d+1)$,
which is a twisted cohomology operation.  If $d = 3$, the map in question is a twisted version of the Pontryagin squaring operation, while if $d > 3$ the operation can be described as $Sq^2 + w_2 \cup$, where $w_2$ is the second Stiefel--Whitney class of the bundle.  In that case, the secondary obstruction yields a well-defined coset in 
\[
o_2(\xi) \in H^{d+1}(X,\Z/2)/(Sq^2 + w_2 \cup)H^{d-1}(X,\Z[\sigma])
\]
This description of the primary and secondary obstructions was laid out carefully by the early 1950s by S.D. Liao \cite{Liao}. 

Finally, the dimension assumption on $X$ guarantees that a lift of $\xi$ along $f$ exists if and only if these two obstructions vanish.  In principle, this kind of analysis can be continued, though the calculations become more involved as the indeterminacy created by successive choices of lifts becomes harder to control and information about higher unstable homotopy of spheres is also harder to obtain.  For a thorough treatment of this and even more general situations, we refer the reader to \cite{Thomas}.  

\begin{rem}
The analysis of the obstructions can be improved by organizing the calculations differently.  The Moore--Postnikov factorization has the effect of factoring a map $f: X \to Y$ as a tower of fibrations where the relevant fibers are Eilenberg--Mac Lane spaces.  However, there are many other ways to produce factorizations of $f$ with different constraints on the ``cohomological" properties of pieces of the tower.  
\end{rem}

\section{A quick review of motivic homotopy theory}
\label{sec:motivichomotopytheory}
Motivic homotopy theory, introduced by F. Morel and V. Voevodsky \cite{MV}, provides a homotopy theory for schemes over a base.  While there are a number of different approaches to constructing the motivic homotopy category that work in great generality, we work in a very concrete situation.  By an algebraic variety over a field $k$, we will mean a separated, finite type, reduced $k$-scheme.  We write $\Sm_k$ for the category of smooth algebraic varieties; for later use, we will also write $\Sm_k^{aff}$ for the full subcategory of $\Sm_k$ consisting of affine schemes.  

The category $\Sm_k$ is ``too small'' to do homotopy theory, in the sense that various natural categorical constructions one would like to make (increasing unions, quotients by subspaces, etc.) can leave the category.  As such, one first enlarges $\Sm_k$ to a suitable category $\Spc_k$ of ``spaces''; one may take $\Spc_k$ to be the category of simplicial presheaves on $\Sm_k$ and the functor $\Sm_k \to \Spc_k$ is given by the Yoneda embedding followed by the functor viewing a presheaf on $\Sm_k$ as a constant simplicial presheaf.  

\subsection{Homotopical sheaf theory}\label{subsec:homotopical}
Passing to $\Spc_k$ has the effect of destroying certain colimits that one would like to retain.  To recover the colimits that have been lost, one localizes $\Spc_k$ and passes to a suitable ``local'' homotopy category of the sort first studied in detail by K. Brown--S. Gersten, A. Joyal and J.F. Jardine: one fixes a Grothendieck topology $\tau$ on $\Sm_k$ and inverts the so-called $\tau$-local weak equivalences on $\Spc_k$; we refer the reader to \cite{JardineLocal} for a textbook treatment.  We write $\mathrm{H}_{\tau}(k)$ for the resulting localization of $\Spc_k$.  If $\mathscr{X} \in \Spc_k$, then a base-point for $\mathscr{X}$ is a morphism $x: \Spec k \to \mathscr{X}$ splitting the structure morphism.  There is an associated pointed homotopy category and these homotopy categories can be thought of as providing a convenient framework for ``non-abelian'' homological algebra.

Henceforth, we take $\tau$ to be the Nisnevich topology (which is finer than the Zariski topology, but coarser than the \'etale topology).  For the purposes of this note, it suffices to observe that the Nisnevich cohomological dimension of a $k$-scheme is equal to its Krull dimension, like the Zariski topology.

In the category of pointed spaces, we can make sense of wedge sums and smash products, just as in ordinary topology.  We also define spheres $S^i$, $i \geq 0$, as the constant simplicial presheaves corresponding to the simplicial sets $S^i$.  For any pointed space $(\mathscr{X},x)$, we define its homotopy sheaves $\bpi_i(\mathscr{X},x)$ as the Nisnevich sheaves associated with the presheaves on $\Sm_k$ defined by
\[
U \longmapsto \hom_{\mathrm{H}_{\Nis}(k)}(S^i \sma U_+,\mathscr{X},x);
\] 
here the subscript $+$ means adjoint a disjoint base-point.  These homotopy sheaves may be used to formulate a Whitehead theorem.


If $\mathbf{G}$ is a Nisnevich sheaf of groups on $\Sm_k$, then there is a classifying space $\op{B}\mathbf{G}$ such that for any smooth $k$-scheme $X$ one has a functorial identification of pointed sets of the form:
\[
\hom_{\mathrm{H}_{\Nis}(k)}(X,\op{B}\mathbf{G}) = \mathrm{H}^1_{\Nis}(X,\mathbf{G}).
\]
For later use, we set
\[
\mathrm{Vect}_n(X) := \mathrm{H}^1_{\Zar}(X,\op{GL}_n) = \mathrm{H}^1_{\Nis}(X,\op{GL}_n) = \hom_{\mathrm{H}_{\Nis}(k)}(X,\op{BGL}_n);
\]
where we as usual identify isomorphism classes of rank $n$ vector bundles locally trivial with respect to the Zariski topology on $X$ with $\op{GL}_n$-torsors (and the choice of topology does not matter).

If $\mathbf{A}$ is any Nisnevich sheaf of abelian groups on $\Sm_k$, then for any integer $n \geq 0$ there are Eilenberg--Mac Lane spaces $\mathrm{K}(\mathbf{A},n)$, i.e., spaces with exactly one non-vanishing homotopy sheaf, appearing in degree $n$, isomorphic to $\mathbf{A}$.  For such spaces, $\hom_{\mathrm{H}_{\Nis}(k)}(X,\mathrm{K}(\mathbf{A},n))$ has a natural abelian group structure, and there are functorial isomorphisms of abelian groups
\[
\hom_{\mathrm{H}_{\Nis}(k)}(X,\mathrm{K}(\mathbf{A},n)) = \mathrm{H}^n_{\Nis}(X,\mathbf{A}).
\]
With this definition, for essentially formal reasons there is a suspension isomorphism for Nisnevich cohomology with respect to the suspension $S^1 \sma (-)$. 


\subsection{The motivic homotopy category}
\label{subsec:A1story}
The motivic homotopy category is obtained as a further localization of $\mathrm{H}_{\Nis}(k)$: one localizes at the projection morphisms $\mathscr{X} \times \aone \to \mathscr{X}$.  We write $\mathrm{H}(k)$ for the resulting homotopy category; isomorphisms in this category will be referred to as $\aone$-weak equivalences.  Following the notation in classical homotopy theory, we write
\[
[\mathscr{X},\mathscr{Y}]_{\aone} := \hom_{\mathrm{H}(k)}(\mathscr{X},\mathscr{Y})
\] 
and refer to this set as the set of $\aone$-homotopy classes of maps from $\mathscr{X}$ to $\mathscr{Y}$.  

If $\mathscr{X}$ is a space, we will write $\bpi_0^{\aone}(\mathscr{X})$ for the Nisnevich sheaf associated with the presheaf $U \mapsto [U,\mathscr{X}]_{\aone}$ on $\Sm_k$; we refer to $\bpi_0^{\aone}(\mathscr{X})$ as the sheaf of connected components, and we say that $\mathscr{X}$ is $\aone$-connected if $\bpi_0^{\aone}(\mathscr{X})$ is the sheaf $\Spec(k)$.   

We consider $\gm{}$ as a pointed space, with base point its identity section $1$.  In that case, we define motivic spheres
\[
S^{i,j} := S^i \wedge \gm{\sma j}.
\]
We caution the reader that there are a number of different indexing conventions used for motivic spheres.  One defines bigraded homotopy sheaves $\bpi_{i,j}^{\aone}(\mathscr{X},x)$ for any pointed space as the Nisnevich sheaves associated with the presheaves on $\Sm_k$
\[
U \longmapsto [S^{i,j} \sma U_+, \mathscr{X}]_{\aone};
\]
we write $\bpi_i^{\aone}(\mathscr{X},x)$ for $\bpi_{i,0}^{\aone}(\mathscr{X})$.  We will say that a pointed space $(\mathscr{X},x)$ is $\aone$-$k$-connected for some integer $k \geq 1$ if it is $\aone$-connected and the sheaves $\bpi_i^{\aone}(\mathscr{X},x)$ are trivial for $1 \leq i \leq k$.  Because of the form of the Whitehead theorem in the Nisnevich local homotopy category, the sheaves $\bpi_{i}^{\aone}(-)$ detect $\aone$-weak equivalences.   

We write $\Delta^{\bullet}_k$ for the cosimplicial affine space with 
\[
\Delta^n_k := \Spec k[x_0,\ldots,x_n]/\langle \sum_i x_i = 1 \rangle.
\]  
For any space $\mathscr{X}$, we write $\Singaone \mathscr{X}$ for the space $\operatorname{diag} \underline{\hom}(\Delta^{\bullet},\mathscr{X})$.  There is a canonical map $\mathscr{X} \to \Singaone \mathscr{X}$ and the space $\Singaone \mathscr{X}$ is called the singular construction on $\mathscr{X}$.  For a smooth scheme $U$, the set of connected components $\pi_0(\Singaone \mathscr{X}(U))$ will be called the set of {\em naive $\aone$-homotopy classes} of maps $U \to \mathscr{X}$ (by construction, it is the quotient of the set of morphisms $U \to \mathscr{X}$ by the equivalence relation generated by maps $U \times \aone \to \mathscr{X}$).  Again, by definition there is a comparison morphism:
\begin{equation}
	\label{eqn:naivetruecomparison}
\pi_0(\Singaone \mathscr{X}(U)) \longrightarrow [U,\mathscr{X}]_{\aone}.
\end{equation}
Typically, the map \eqref{eqn:naivetruecomparison} is far from being a bijection.



\subsection{$\aone$-weak equivalences}\label{subsec:A1we}
We now give a number of examples of $\aone$-weak equivalences, highlighting some examples and constructions that will be important in the sequel.

\begin{ex}
	\label{ex:aonecontractiblesI}
A smooth $k$-scheme $X$ is called {\em ${\mathbb A}^1$-contractible} if the structure morphism $X \to \Spec k$ is an $\aone$-weak equivalence.  By construction, ${\mathbb A}^n$ is an $\aone$-contractible smooth $k$-scheme.  However, there are a plethora of $\aone$-contractible smooth $k$-schemes that are non-isomorphic to ${\mathbb A}^n$. For instance, the Russell cubic threefold, defined by the hypersurface equation $x + x^2y + z^2 + t^3 = 0$ is known to be non-isomorphic to affine space and also $\aone$-contractible \cite{DuboulozFasel}. See \cite{AsokOstvaer} for a survey of further examples.   
\end{ex}

\begin{ex}
If $f: X \to Y$ is a Nisnevich locally trivial morphism with fibers that are $\aone$-contractible smooth $k$-schemes, then $f$ is an $\aone$-weak equivalence.  Thus, the projection morphism for a vector bundle is an $\aone$-weak equivalence.  A vector bundle $E$ over a scheme $X$ can be seen as a commutative algebraic $X$-group scheme, so we may speak of $E$-torsors; $E$-torsors are classified by the coherent cohomology group $\mathrm{H}^1(X,\mathscr{E})$ (in particular, vector bundle torsors over affine schemes may always be trivialized).  Vector bundle torsors are Zariski locally trivial fiber bundles with fibers isomorphic to affine spaces, and the projection morphism for a vector bundle torsor is an $\aone$-weak equivalence.
\end{ex}

By an {\em affine vector bundle torsor} over a scheme $X$ we will mean a torsor $\pi: Y \to X$ for some vector bundle $E$ on $X$ such that $Y$ is an affine scheme.  Jouanolou proved \cite[Lemme 1.5]{Jouanolou73} that any quasi-projective variety admits an affine vector bundle torsor.  Thomason \cite[Proposition 4.4]{WeibelHKT} generalized Jouanolou's observation, and the following result is a special case of his results.

\begin{lem}[Jouanolou--Thomason homotopy lemma]
	\label{lem:JThomotopy}
	If $X$ is a smooth $k$-variety, then $X$ admits an affine vector bundle torsor.  In particular, any smooth $k$-variety is isomorphic in $\ho{k}$ to a smooth affine variety.  
\end{lem}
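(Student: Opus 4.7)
The plan is to establish the first statement—existence of an affine vector bundle torsor $\pi : Y \to X$—since the second follows immediately: such a $\pi$ is Zariski locally isomorphic to a projection $X \times \aone^r \to X$, so $Y$ is automatically smooth, and the preceding example identifies $\pi$ as an $\aone$-weak equivalence. Thus $Y$ provides a smooth affine representative of the $\aone$-homotopy type of $X$.

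For the first stage I would recall Jouanolou's prototype on $\mathbb{P}^n$: the closed subscheme
\[
W_n := \{\, p \in \mathrm{End}(\OO_k^{n+1}) : p^2 = p,\ \mathrm{tr}(p) = 1 \,\}
\]
of the affine space of $(n+1) \times (n+1)$-matrices is affine, and the map $W_n \to \mathbb{P}^n$ sending a rank-one projector $p$ to its image is a torsor under a rank $n$ vector bundle (concretely, $\mathrm{Hom}(\OO(1),\OO^{n+1}/\OO(1))$). For $X$ quasi-projective, a sufficiently ample line bundle yields a locally closed immersion $X \hookrightarrow \mathbb{P}^n$, and a careful pullback of $W_n$ (using the ample embedding to cut the total space out of an affine ambient scheme) recovers Jouanolou's original lemma.

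For a general smooth $k$-variety, which need not be quasi-projective since Hironaka's smooth complete non-projective threefolds are legitimate examples in the sense of the paper, I would pass through Thomason's framework of divisorial schemes, i.e.\ those admitting an ample family of line bundles $\{\mathcal{L}_i\}$: global sections $s_i \in \Gamma(X,\mathcal{L}_i^{\otimes n_i})$ whose non-vanishing loci $X_{s_i}$ are affine and cover $X$. The argument then splits into two sub-steps: verifying that every smooth separated $k$-variety is divisorial (a structural consequence of regularity plus separatedness, via the resolution property for coherent sheaves), and carrying out Thomason's construction which, from an ample family, assembles a vector bundle $E \to X$ and extracts an affine torsor $Y \to X$ under $E$.

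The step I expect to be the main obstacle is verifying affineness of $Y$ in the non-quasi-projective setting. Jouanolou's argument exploits global projective coordinates that are unavailable for Hironaka-type varieties, so one must replace them with the ample family, and affineness of $Y$ must then be established via a Serre-type cohomological criterion, or equivalently by exhibiting $Y$ as a union of distinguished opens pulled back from the $X_{s_i}$ and checking that the transitions glue to a single affine scheme. Once affineness is secured, the remaining verifications that $\pi$ is a torsor under a vector bundle are routine local computations on an affine trivializing cover on which the ample family reduces to a system of units.
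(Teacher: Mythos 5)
Your proposal follows the same route the paper does: the paper does not give a self-contained argument, but simply cites Jouanolou's Lemme 1.5 for the quasi-projective case and Thomason's generalization (Proposition 4.4 of Weibel's ``Homotopy Algebraic K-Theory'') to schemes with an ample family of line bundles, noting that smooth separated $k$-varieties fall into that class because regular, separated, Noetherian schemes are divisorial. Your sketch of the two underlying arguments is a reasonable fleshing-out of that citation, modulo a cosmetic slip in identifying the structure bundle on $\mathbb{P}^n$: the Jouanolou torsor is principal under $\mathrm{Hom}(\mathcal{O}^{n+1}/\mathcal{O}(-1),\,\mathcal{O}(-1))$ (a complement of a line $L$ being a torsor under $\mathrm{Hom}(k^{n+1}/L,\,L)$), not $\mathrm{Hom}(\mathcal{O}(1),\mathcal{O}^{n+1}/\mathcal{O}(1))$, since $\mathcal{O}(1)$ is not a subsheaf of $\mathcal{O}^{n+1}$.
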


\begin{defn}
	\label{defn:Jdevice}
	By a {\em Jouanolou device} for a smooth $k$-variety $X$, we will mean a choice of an affine vector bundle torsor $p: Y \to X$. 
\end{defn}

\begin{ex}
	\label{ex:JdevicePn}
	When $X = {\mathbb P}^n$ there is a very simple construction of a ``standard'' Jouanolou device $\widetilde{\PP}^n$.  Geometrically, the standard Jouanolou device for ${\mathbb P}^n$ may be described as the complement of the incidence divisor in ${\mathbb P}^n \times {\mathbb P}^n$ where the second projective space is viewed as the dual of the first, with structure morphism the projection onto either factor.  
\end{ex}

\begin{ex}
	\label{ex:projectiveJdevice}
	If $X$ is a smooth projective variety of dimension $d$, then we may choose a finite morphism $\psi: X \to {\mathbb P}^d$.  Pulling back the standard Jouanolou device for ${\mathbb P}^d$ along $\psi$, we see that $X$ admits a Jouanolou device $\tilde{X}$ of dimension $2d$.  
\end{ex}

\begin{ex}
	\label{ex:oddquadric}
	For $n\in \N$, consider the smooth affine $k$-scheme $Q_{2n-1}$ defined as the hypersurface in ${\mathbb A}^{2n}_k$ given by the equation $\sum_{i=1}^n x_iy_i = 1$.  Projecting onto the first $n$-factors, we obtain a map $p:Q_{2n-1}\to \A^n\setminus 0$ which one may check is an affine vector bundle torsor.  For any integer $n \geq 0$, ${\mathbb A}^n \setminus 0$ is $\aone$-weakly equivalent to $S^{n-1,n}$ ( \cite[\S 3.2, Example 2.20]{MV}) and consequently $Q_{2n-1}$ is $\aone$-weakly equivalent to $S^{n-1,n}$ as well.
	
\end{ex}


\begin{ex}
	\label{ex:evenquadric}
For $n \in \N$, consider the smooth affine $k$-scheme $Q_{2n}$ defined as the hypersurface in ${\mathbb A}^{2n+1}_k$ given by the equation
\[
\sum_{i=1}^n x_i y_i = z(1-z).
\]
The variety $Q_{2}$ is isomorphic to the standard Jouanolou device over $\pone$.  The variety $\pone$ is $\aone$-weakly equivalent to $S^{1,1}$ and therefore $Q_2$ is $\aone$-weakly equivalent to $S^{1,1}$ as well. For $n \geq 2$, one knows that $Q_{2n}$ is $\aone$-weakly equivalent to $S^{n,n}$ \cite[Theorem 2]{AsokDoranFasel}.
\end{ex}

%
%



\subsection{Representability results}\label{subsec:representability}
If $\mathscr{F}$ is a presheaf on $\Sm_k$, we will say that $\mathscr{F}$ is $\aone$-invariant (resp. $\aone$-invariant on affines) if the pullback map $\mathscr{F}(X) \to \mathscr{F}(X \times \aone)$ is an isomorphism for all $X \in \Sm_k$ (resp. $X \in \Sm_k^{\aff}$).  A necessary condition for a cohomology theory on smooth schemes to be representable in $\mathrm{H}(k)$ is that it is $\aone$-invariant and has a Mayer--Vietoris property with respect to the Nisnevich topology.  One of the first functors that one encounters with these properties is that which assigns to a smooth $k$-scheme its Picard group.  Morel and Voevodsky showed \cite[\S 4 Proposition 3.8]{MV} that if $X$ is a smooth $k$-scheme, then the $\aone$-weak equivalence ${\mathbb P}^{\infty} \to \mathrm{B}\gm{}$ induces a bijection $[X,{\mathbb P}^{\infty}]_{\aone} \cong Pic(X)$.



If $\mathbf{A}$ is a sheaf of abelian groups on $\Sm_k$, then the functors $\mathrm{H}_{\Nis}^i(-,\mathbf{A})$ frequently fail to be $\aone$-invariant (taking $\mathbf{A} = \mathbb{G}_a$ gives a simple example) and therefore fail to be representable on $\Sm_k$.  The situation above where $\mathbf{A} = \gm{}$ provides the prototypical example of a sheaf whose cohomology {\em is} $\aone$-invariant (here the zeroth cohomology is the presheaf of units, which is even $\aone$-invariant on reduced schemes).  Following Morel and Voevodsky, we distinguish the cases where sheaf cohomology is $\aone$-invariant. 

\begin{defn}
	\label{defn:strongaoneinvariance}
A sheaf of groups $\mathbf{G}$ on $\Sm_k$ is called {\em strongly $\aone$-invariant} if for $i = 0,1$ the functors $\mathrm{H}_{\Nis}^i(-,\mathbf{G})$ on $\Sm_k$ are $\aone$-invariant.  A sheaf of abelian groups $\mathbf{A}$ on $\Sm_k$ is called {\em strictly $\aone$-invariant} if for all $i \geq 0$ the functors $\mathrm{H}_{\Nis}^i(-,\mathbf{A})$ on $\Sm_k$ are $\aone$-invariant.  
\end{defn}

The fundamental work of Morel, which we will review shortly, demonstrates the key role played by strongly and strictly $\aone$-invariant sheaves.  Nevertheless, various natural functors of geometric origin {\em fail} to be $\aone$-invariant on smooth schemes.  

\begin{ex}
	\label{ex:failureofaoneinvariance}
If $r \geq 2$, then the functor $\mathrm{H}_{\Nis}^1(-,GL_r)$ fails to be $\aone$-invariant on all schemes.  For an explicit example, consider the simplest case.  By a theorem of Dedekind--Weber frequently attributed to Grothendieck every rank $n$ vector bundle on $\pone$ is isomorphic to a unique line bundle of the form $\oplus_{i=1}^n \mathcal{O}(a_i)$ with the $a_i$ weakly increasing.  On the other hand, consider ${\pone} \times {\mathbb A}^1$ with coordinates $t$ and $x$.  The matrix
\[
\begin{pmatrix}
t \;\;& x \\
0 \;\;& t^{-1}
\end{pmatrix}
\]
determines a rank $2$ vector bundle on ${\pone} \times {\mathbb A}^1$ whose restriction to ${\pone} \times 0$ is $\mathcal{O}(1) \oplus \mathcal{O}(-1)$ and whose restriction to ${\pone} \times 1$ is $\mathcal{O} \oplus \mathcal{O}$.  
In contrast, Lindel's theorem affirming the Bass--Quillen conjecture in the geometric case shows that $\mathrm{H}_{\Nis}^1(-,GL_r)$ is $\aone$-invariant on affines.  The next result generalizes this last observation.
\end{ex}

\begin{thm}[Morel, Schlichting, Asok--Hoyois--Wendt]
	\label{thm:affinerepresentabilityVB}
	If $X$ is a smooth affine $k$-scheme, then for any $r \in \N$ there are functorial bijections of the form:
	\[
	\pi_0(\Singaone Gr_r(X)) \isomto [X,Gr_r]_{\aone} \isomto \mathrm{Vect}_r(X).
	\]	
\end{thm}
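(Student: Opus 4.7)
The plan is to treat the two bijections separately and to work throughout on the category $\Sm_k^{\aff}$. The key inputs will be Lindel's theorem on the geometric Bass--Quillen conjecture (giving $\aone$-invariance of $\mathrm{Vect}_r$ on smooth affines, contrasting Example~\ref{ex:failureofaoneinvariance}), affine Nisnevich descent for vector bundles, and an unstable affine representability principle in the $\aone$-homotopy category whose verification constitutes the substantive content.

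First I would identify $\pi_0(\Singaone Gr_r(X))$ with $\mathrm{Vect}_r(X)$ directly. Writing $Gr_r = \colim_N Gr_r(\A^N)$, a morphism $X \to Gr_r(\A^N)$ from a smooth affine $X$ is, by pullback of the tautological quotient, the same as a surjection $\mathcal{O}_X^N \onto \mathcal{E}$ onto a locally free quotient of rank $r$. Since every rank $r$ bundle on a smooth affine is generated by finitely many global sections, the resulting map to $\mathrm{Vect}_r(X)$ is surjective. Injectivity on naive $\aone$-homotopy classes follows from a standard stabilization argument: two surjections presenting isomorphic quotients become conjugate by an element of $\op{GL}_{N+M}$ after adding $M$ trivial summands for $M$ sufficiently large, and such an element is naively $\aone$-homotopic to the identity by a straight-line path in the stabilized general linear group.

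Next I would upgrade this to the full $\aone$-homotopy identification. The comparison map \eqref{eqn:naivetruecomparison} is always defined. By the previous step, the presheaf $U \mapsto \pi_0(\Singaone Gr_r(U))$ on $\Sm_k^{\aff}$ is identified with $\mathrm{Vect}_r$, which is $\aone$-invariant (Lindel) and satisfies affine Nisnevich descent (vector bundles glue along elementary Nisnevich squares of affines). An unstable affine representability principle then asserts that a presheaf with these two properties computes $\aone$-homotopy classes into an appropriate classifying object; applied to $\mathrm{Vect}_r$ this identifies its value on $X \in \Sm_k^{\aff}$ with $[X, Gr_r]_{\aone}$, as desired.

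The main obstacle is this affine representability principle. To apply it one must show that the sheaf of groups controlling first-degree Nisnevich cohomology is strongly $\aone$-invariant on smooth affines, despite the fact that $\op{GL}_r$ itself is not strongly $\aone$-invariant on all of $\Sm_k$ (Example~\ref{ex:failureofaoneinvariance}). Morel handled the case $r \geq 3$ in his ICTP book via a delicate $\aone$-Postnikov tower analysis tied to his computation of motivic homotopy sheaves of spheres in terms of Milnor--Witt $\KMW$-groups. The case $r = 2$ lies outside those methods, since $\bpi_1^{\aone} \operatorname{BGL}_2$ is controlled by Hermitian $\op{K}$-theory rather than by $\KMW$, and Schlichting's computations supplied the extra input needed. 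Asok--Hoyois--Wendt later provided a uniform approach that axiomatises the requisite affine excision and $\aone$-invariance properties and deduces representability in one stroke, covering all ranks simultaneously.
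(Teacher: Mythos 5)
The paper does not actually prove this theorem: it is a survey and the statement is given with a citation to Morel \cite{Morel08}, Schlichting \cite{SchlichtingEuler}, and Asok--Hoyois--Wendt \cite{AHW}, followed only by a historical remark. So your proposal should be judged against the approach those works take, and on that score your high-level decomposition --- first identify $\pi_0(\Singaone Gr_r(X))$ with $\mathrm{Vect}_r(X)$ by hand, then invoke an affine representability principle (Lindel's theorem giving $\aone$-invariance of $\mathrm{Vect}_r$ on affines, plus affine Nisnevich excision, whose verification is the hard technical core) to get the second bijection --- is exactly the shape of the argument in \cite{AHW}, and you correctly locate where the real work lives and why $r=2$ was once problematic.

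There is, however, a genuine gap in your injectivity argument for the first bijection. You claim that after stabilization the $\op{GL}_{N+M}$ element conjugating two presentations is ``naively $\aone$-homotopic to the identity by a straight-line path in the stabilized general linear group.'' That is false for a general element: the map $\op{GL}(R) \to K_1(R)$ factors through $\pi_0(\Singaone \op{GL}(R))$, so elements with nontrivial determinant (or more generally nontrivial $K_1$-class) are \emph{not} naively $\aone$-homotopic to the identity, and a single straight line path in $\op{GL}$ stays inside a coset of the elementary subgroup. The standard repair does not need any such claim: given two surjections $\alpha,\beta\colon \mathcal{O}^N\onto \mathcal{E}$, the map $H\colon \mathcal{O}^{2N}_{R[t]}\to \mathcal{E}_{R[t]}$, $H(x,y)=(1-t)\alpha(x)+t\beta(y)$, is surjective (since $(1-t)$ and $t$ generate the unit ideal of $R[t]$, at every prime one of the two summands already surjects) and thus gives a naive $\aone$-homotopy from $\alpha\oplus 0$ to $0\oplus\beta$ directly in $Gr_r(\A^{2N})$, after which a second such homotopy swaps the two blocks. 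This sidesteps any appeal to elementary matrices entirely. Two smaller imprecisions: the descent hypothesis you must verify is for the presheaf $\Singaone Gr_r$ on affine Nisnevich squares, which is much subtler than the descent of $\mathrm{Vect}_r$ itself, and the phrase ``the sheaf of groups controlling first-degree Nisnevich cohomology is strongly $\aone$-invariant on smooth affines'' is off --- $\op{GL}_r$ is not strongly $\aone$-invariant even on affines (its sections are not $\aone$-invariant); what one actually proves is $\aone$-invariance of the $\mathrm{H}^1$-functor and Nisnevich excision for the singular construction.
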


\begin{rem}
The above result was first established by F Morel in \cite{Morel08} for $r \neq 2$ and $k$ an infinite, perfect field, and his proof was partly simplified by M. Schlichting whose argument also established the case $r = 2$ \cite{SchlichtingEuler}.  The version above is stated in \cite{AHW}.
\end{rem}

\begin{rem}
While the functor of isomorphism classes of vector bundles is $\aone$-invariant on smooth affine $k$-schemes, even the latter can fail for $\op{G}$-torsors under more general group schemes, e.g., the special orthogonal group scheme $\op{SO}_n$ (\cite{Parimala78}, or \cite{Ojanguren78}).  Furthermore, while $\op{GL}_n$-torsors are always locally trivial with respect to the Nisnevich (and even the Zariski) topology, for an arbitrary smooth $k$-group scheme $\op{G}$, one only knows that $\op{G}$-torsors are locally trivial with respect to the \'etale topology.  

In \cite{AHWII,AHWIII}, it is shown that if $\op{G}$ is an isotropic reductive group scheme (see \cite[Definition 3.3.5]{AHWII} for the definition), then the functor assigning to $X \in \Sm_k^{aff}$ the set $\op{H}^1_{\Nis}(X,\op{G})$ is representable by $\op{BG}$.  This observation has a number of consequences, e.g., the following result about quadrics (see Examples~\ref{ex:oddquadric} and \ref{ex:evenquadric}).
\end{rem}

\begin{thm}[\cite{AHWII,AHWIII,AQuadric}]
	\label{thm:naivequadrics}
		For any integer $i \geq 1$, and any $X \in \Sm_k^{aff}$, the comparison map:
		\[
		\pi_0(\Singaone Q_{i}(X)) \isomto [X,Q_{i}]_{\aone}
		\]
		is a bijection, contravariantly functorial in $X$.
\end{thm}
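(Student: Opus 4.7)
The strategy is to apply the affine representability framework developed in \cite{AHW, AHWII, AHWIII}. By the criterion underlying Theorem~\ref{thm:affinerepresentabilityVB}, to show that the comparison map is a bijection it suffices to verify that the presheaf on $\Sm_k^{\aff}$ defined by $X \mapsto \pi_0(\Singaone Q_i(X))$ satisfies affine Nisnevich excision (i.e., it converts elementary Nisnevich squares of affines into Cartesian squares of sets) and is $\aone$-invariant on affines. The $\aone$-invariance is automatic from the construction of $\Singaone$, so the entire content of the assertion lies in the excision property.

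First I would realize each $Q_i$ as a homogeneous space $G/H$ for a split (hence isotropic) reductive $k$-group $G$. The odd quadric $Q_{2n-1}$ of Example~\ref{ex:oddquadric} fits into an affine vector bundle torsor over $\aone^n \setminus 0$ and is itself homogeneous under the natural $\SL_n$-action on pairs $(x,y)$ with $\sum x_iy_i = 1$, with stabilizer a copy of $\SL_{n-1}$. The even quadric $Q_{2n}$ of Example~\ref{ex:evenquadric} can be presented as a homogeneous space under a split orthogonal group, which is isotropic of semisimple rank $\geq 1$ in the relevant range. Under these presentations, $Q_i$ falls within the class of quotients to which the machinery of \cite{AHWII, AHWIII} is designed to apply.

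Second, I would invoke the main theorems of \cite{AHWII, AHWIII}: for an isotropic reductive $k$-group scheme $G$ and a suitable closed subgroup $H \subset G$, the presheaf $X \mapsto \pi_0(\Singaone(G/H)(X))$ satisfies affine Nisnevich excision and therefore represents the functor $[-, G/H]_{\aone}$ on $\Sm_k^{\aff}$. The underlying mechanism is that sections of $G \to G/H$ over an elementary affine Nisnevich cover can be rectified by explicit $\aone$-chains of elementary matrices arising from root subgroups of $G$; combined with affine representability for $G$-torsors (cited in the remark preceding the theorem), this produces the required patching property.

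The main obstacle is verifying that the explicit geometry of each $Q_i$ matches the abstract hypotheses of \cite{AHWII, AHWIII}, particularly in the edge cases: $Q_1 \cong \Gm$, the case $Q_2$ which is only $\aone$-weakly equivalent (not isomorphic) to the Jouanolou device of $\pone$, and the small odd quadrics. The dedicated argument in \cite{AQuadric} addresses exactly this point: one must confirm that the natural presentation of $Q_i$ as a homogeneous space retains the isotropic structure required to run the excision argument in every range of $i$, which ultimately reduces to concrete statements about the root data of the ambient reductive group and its action on the defining equation of $Q_i$. Once this compatibility is checked in every case, the theorem follows formally from the representability criterion.
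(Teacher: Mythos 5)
Your proposal follows the roadmap indicated by the remark immediately preceding the theorem in the paper: reduce the question to affine Nisnevich excision via the Asok--Hoyois--Wendt affine representability criterion, realize each quadric $Q_i$ as a homogeneous space under an isotropic reductive group, and deduce the claim from the representability theorems for torsors under such groups. Since the paper is a survey that states this theorem with only citations to \cite{AHWII,AHWIII,AQuadric} and no written proof, your reconstruction matches the strategy the paper is implicitly referencing, so the overall logical structure is sound.

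A few inaccuracies are worth correcting. First, $Q_2$ is \emph{isomorphic} to the standard Jouanolou device of $\pone$, not merely $\aone$-weakly equivalent --- this is stated in Example~\ref{ex:evenquadric} --- so this is not the kind of edge case you suggest; rather, it is the uniformization via $\SL_2/\Gm$ that requires separate treatment because the stabilizer is a torus rather than semisimple. Second, you conflate two separate ingredients of the AHW machinery: the Vorst--Suslin style argument involving elementary subgroups generated by root subgroups is what establishes that naive $\aone$-homotopy classes of $G$-torsors are $\aone$-invariant on affines, whereas the Nisnevich excision property for $\Singaone \op{BG}$ (and for $\Singaone(G/H)$ via the fiber sequence $H \to G \to G/H$) is a distinct patching statement; both are needed but they are not the same step. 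Third, for the even quadrics the homogeneous-space presentation should be made precise: assuming $2$ is invertible, completing the square identifies $Q_{2n}$ with the split affine quadric $\{q = 1\}$ in $2n+1$ variables, hence with a quotient of split orthogonal groups, and one must then check that both the total group and the stabilizer are isotropic (or, in the low-rank case, a torus) in order for the AHW hypotheses to apply. These are exactly the uniformity checks that the dedicated reference \cite{AQuadric} is cited for, and flagging them as the crux of the argument, as you do, is correct.
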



%

\subsection{Postnikov towers, connectedness and strictly $\A^1$-invariant sheaves}
\label{subsec:strictlyA1invariant}
Recall from Definition~\ref{defn:strongaoneinvariance} the notion of strongly or strictly $\aone$-invariant sheaves of groups.  F. Morel showed that such sheaves can be thought of as ``building blocks" for the unstable $\aone$-homotopy category.  Morel's foundational works \cite{Morel05b, Morel08} can be viewed as a careful analysis of strictly and strongly $\aone$-invariant sheaves of groups and the relationship between the two notions.  More precisely, Morel showed that working over a perfect field $k$, the $\aone$-homotopy sheaves of a motivic space are {\em always} strongly $\aone$-invariant, and that strongly $\aone$-invariant sheaves of abelian groups are automatically strictly $\aone$-invariant.

To check this, Morel showed that strongly (resp. strictly) $\aone$-invariant sheaves of groups come equipped with a package of results/tools that are central to computations; this package of results is essentially an extension/amalgam/axiomatization of work of Bloch--Ogus and Gabber on \'etale cohomology exposed in \cite{CTHK} and Rost \cite{Rost96}.

\begin{ex}
	\label{ex:aoneinvariantsheaves}
Some examples of $\aone$-invariant sheaves that will appear in the sequel are:
\begin{itemize}
	\item unramified Milnor K-theory sheaves $\K^M_i$, $i \geq 0$ (see \cite[Corollary 6.5, Proposition 8.6]{Rost96} where, more generally, it is shown that any Rost cycle module gives rise to a strictly $\aone$-invariant sheaf);
	\item the Witt sheaf $\mathbf{W}$ or unramified powers of the fundamental ideal in the Witt ring $\mathbf{I}^j$, $j \geq 0$ (this follows from \cite{OjangurenPanin}); and 
	\item unramified Milnor--Witt K-theory sheaves $\K^{MW}_i$, $i \in \Z$ (see \cite[Chapter 3]{Morel08} for this assertion, or \cite[Corollary 8.5,Proposition 9.1]{Feld20a} where this observation is generalized to so-called Milnor--Witt cycle modules).
\end{itemize}
\end{ex}


\begin{entry}[Moore--Postnikov factorizations]
	\label{entry:aoneMoorePostnikov}
There is an analog of the Moore--Postnikov factorization of a map $f: \mathscr{E} \to \mathscr{B}$ of spaces along the lines described in Section~\ref{sec:topologicalstory}.   For concreteness we discuss the case where $\mathscr{E}$ and $\mathscr{B}$ are $\aone$-connected and $f$ induces an isomorphism on $\aone$-fundamental sheaves of groups for some choice of base-point in $\mathscr{E}$.  

Given $f$ as above, there are $\tau_{\leq i}f \in \Spc_k$ together with maps $\mathscr{E} \to \tau_{\leq i} f$, $\tau_{\leq i}f \to \mathscr{B}$ and $\tau_{\leq i}f \to \tau_{\leq i-1} f$ fitting into a diagram of exactly the same form as \eqref{eqn:MoorePostnikovdiagram} (replacing $E$ by $\mathscr{E}$ and $B$ by $\mathscr{B}$).  The relevant properties of this presentation are similar to those sketched before (replacing homotopy groups by homotopy sheaves), together with a homotopy pullback diagram of exactly the same form as \eqref{eqn:twistedprincipalfibration}.   We refer to this tower as the $\aone$--Moore--Postnikov tower of $f$ and the reader may consult \cite[Appendix B]{Morel08} or \cite[\S 6]{AFpi3a3minus0} for a more detailed presentation.  

If $X$ is a smooth scheme, then a map $\psi: X \to \mathscr{B}$ lifts to $\tilde{\psi}: X \to \mathscr{E}$ if and only if lifts exist at each stage of the tower, i.e. if and only if a suitable obstruction vanishes.  These obstructions are, by construction, valued in Nisnevich cohomology on $X$ with values in a strictly $\aone$-invariant sheaf (see \cite[\S 6]{AFpi3a3minus0} for a more detailed explanation).  
\end{entry}


By analogy with the situation in topology, we will use the $\aone$--Moore--Postnikov factorization to study lifting problems by means of obstruction theory. The relevant obstructions will lie in cohomology groups of a smooth scheme with coefficients in a strictly $\aone$-invariant sheaf. This motivates the following definition. 

\begin{defn}
	\label{defn:aonecohomologicaldimension}
Let $X$ be a smooth $k$-scheme. We say that $X$ has \emph{$\A^1$-cohomological dimension $\leq d$} if for any integer $i>d$ and any strictly $\A^1$-invariant sheaf $\mathbf{F}$, $\mathrm{H}_{\mathrm{Nis}}^i(X,\mathbf{F})=0$. In that case, we write $cd_{\A^1}(X)\leq d$.  
\end{defn}

\begin{ex}
	\label{ex:aonecohomologicaldimensionstrictbound}
	If $X$ is a smooth $k$-scheme of dimension $d$, then $X$ necessarily has $\aone$-cohomological dimension $\leq d$ as well.  Since ${\mathbb A}^n$ has $\aone$-cohomological dimension $\leq 0$, the $\aone$-cohomological dimension can be strictly smaller than Krull dimension; Example \ref{ex:projectiveJdevice} gives numerous other such examples. 
\end{ex} 
	

\subsection{Complex realization}\label{subsec:realization}
Assume $k$ is a field that admits an embedding $\iota_{{\mathbb C}}: k \hookrightarrow {\mathbb C}$. The functor that assigns to a smooth $k$-variety $X$ the complex manifold $X(\cplx)$ equipped with its classical topology extends to a complex realization functor 
\[
{\mathfrak R}_{{\cplx}}: \mathrm{H}(k) \longrightarrow \mathrm{H}
\]
where $\mathrm{H}$ is the usual homotopy category of topological spaces \cite[\S 3.3]{MV}. By construction, complex realization preserves finite products and homotopy colimits.  It follows that the complex realization of the motivic sphere $S^{p,q}$ is the ordinary sphere $S^{p+q}$ and consequently the complex realization functor induces group homomorphisms of the form:
\[
\bpi_{i,j}^{\aone}(X,x)(\cplx) \longrightarrow \pi_{i+j}(X(\cplx),x)
\]
for any pointed smooth $k$-scheme $(X,x)$.

Suppose $X$ is any $k$-scheme admitting a complex embedding and fix such an embedding. Write $\mathrm{Vect}_r^{top}(X)$ for the set of isomorphism classes of complex topological vector bundles on $X$.  There is a function
\[
\mathrm{Vect}_r(X) \longrightarrow \mathrm{Vect}_r^{top}(X)
\]
sending an algebraic vector bundle $E$ over $X$ to the topological vector bundle on $X(\cplx)$ attached to the base change of $E$ to $X_{\cplx}$.  We will say that an algebraic vector bundle is {\em algebraizable} if it lies in the image of this map. 

As rank $r$ topological vector bundles are classified by the set $[X(\cplx),\mathrm{B}U(r)]$ of homotopy classes of maps from $X(\cplx)$ to the complex Grassmannian, it follows that the function of the preceding paragraph factors as:
\[
\mathrm{Vect}_r(X) \longrightarrow [X,Gr_r]_{\aone} \longrightarrow \mathrm{Vect}_r^{top}(X).
\]
Theorem~\ref{thm:affinerepresentabilityVB} implies that the first map is a bijection if $X$ is a smooth affine $k$-scheme (or, alternatively, if $r = 1$).  More generally, combining Theorem~\ref{thm:affinerepresentabilityVB} and Lemma~\ref{lem:JThomotopy} one knows that any element of $[X,Gr_r]_{\aone}$ may be represented by an actual rank $r$ vector bundle on any Jouanolou device $\tilde{X}$ of $X$; this suggests the following definition.

\begin{defn}
	\label{defn:motivicvectorbundle}
If $X$ is a smooth $k$-scheme, then by a {\em rank $r$ motivic vector bundle} on $X$ we mean an element of the set $[X,Gr_r]_{\aone}$.
\end{defn}

\begin{question}
	\label{quest:algebraizability}
	If $X$ is a smooth complex algebraic variety, then which topological vector bundles are algebraizable (resp. motivic)?  
\end{question}


\section{Obstruction theory and vector bundles}
In order to apply the obstruction theory described in the previous sections to analyze algebraic vector bundles, we need more information about the structure of the classifying space $\op{BGL}_n$ including information about its $\aone$-homotopy sheaves, and the structure of the homotopy fiber of the stabilization map $\op{BGL}_n \to \op{BGL}_{n+1}$ induced by the map $\op{GL}_n \to \op{GL}_{n+1}$ sending an invertible matrix $X$ to the block matrix $diag(1,X)$. 

\subsection{The homotopy sheaves of the classifying space of $\op{BGL}_n$}
\label{subsec:motivicBorelHirzebruch}
We observed earlier that $\op{BGL}_1 = \mathrm{B}\gm{}$ is an Eilenberg--Mac Lane space for the sheaf $\gm{}$:  it is $\aone$-connected, and has exactly $1$ non-vanishing $\aone$-homotopy sheaf in degree $1$, which is isomorphic to $\gm{}$.  For $n \geq 1$, the analysis of homotopy sheaves of $\op{BGL}_n$ uses several ingredients.  First, Morel--Voevodsky observed that $\op{BGL} = \colim_n \mathrm{BGL}_n$ (for the inclusions described above) represents (reduced) algebraic K-theory after \cite[\S 4 Theorem 3.13]{MV}.  Second, Morel observed that there is an $\aone$-fiber sequence of the form
\begin{equation}
\label{eqn:stabilization}
{\mathbb A}^{n+1} \setminus 0 \longrightarrow \op{BGL}_n \longrightarrow \op{BGL}_{n+1},
\end{equation}
and that ${\mathbb A}^{n+1} \setminus 0$ is $\aone$-$(n-1)$-connected.  Furthermore, Morel computed \cite{Morel08} the first non-vanishing $\aone$-homotopy sheaf of ${\mathbb A}^{n+1} \setminus 0$ in terms of what he called Milnor--Witt K-theory sheaves (Example~\ref{ex:aoneinvariantsheaves}).  

Putting these ingredients together, one deduces:
\[
\bpi_i^{\aone}(\mathrm{BGL}_n) \cong \K^Q_i \quad\quad 1 \leq i \leq n-1,
\]
where $\K^Q_i$ is the (Nisnevich) sheafification of the Quillen K-theory presheaf on $\Sm_k$.  Following terminology from topology, sheaves in this range are called stable, and the case $i = n$ is called the first non-stable homotopy sheaf.  In \cite{Asok12b}, we described the first non-stable homotopy sheaf of $\op{BGL}_n$.  

The group scheme $\op{GL}_n$ maps to $\op{GL}_n(\cplx)$ under complex realization; the latter is homotopy equivalent to $U(n)$. For context, we recall some facts about homotopy of $U(n)$.  A classical result of Bott, refining results of Borel--Hirzeburch \cite[Theorem 25.8]{BorelHirzebruchII} asserts that the image of $\pi_{2n}(BU(n))$ in $H_{2n}(BU(n))$ is divisible by precisely $(n-1)!$ \cite{BottLoops}.  This result implies the assertion that $\pi_{2n}(U(n)) = n!$.  

Complex realization yields a map $\pi_{n,n}^{\aone}(\op{GL}_n) \longrightarrow \pi_{2n}(U(n))$.  One can view the celebrated ``Suslin matrices'' \cite{SuslinStablyFree} as providing an algebro-geometric realization of the generator of $\pi_{2n}(U(n))$.  Analyzing the fiber sequence of \eqref{eqn:stabilization} and putting all of the ingredients above together, we obtain the following result (we refer the reader to Example~\ref{ex:aoneinvariantsheaves} for notation).

   
\begin{thm}[{\cite[Theorem 1.1]{Asok12b}}]
	\label{thm:firstnonstableBGLn}
Assume $k$ is a field that has characteristic not equal to $2$.  For any integer $n \geq 2$, there are strictly $\aone$-invariant sheaves $\mathbf{S}_n$ fitting into exact sequences of the form:
\[
\begin{split}
0 \longrightarrow \mathbf{S}_{n+1} \longrightarrow &\piaone_n(\mathrm{BGL}_n) \longrightarrow \K^Q_n \longrightarrow 0 \;\;\;\;\; n \text{ odd }; \\
0 \longrightarrow \mathbf{S}_{n+1}\times_{\KM_{n+1}/2}\mathbf{I}^{n+1}\to &\piaone_n(\mathrm{BGL}_n)\longrightarrow \K^Q_n \longrightarrow 0 \;\;\;\;\; n \text { even, }
\end{split}
\]
where 
\begin{itemize}
	\item[(1)] there is a canonical epimorphism $\KM_n/(n-1)!\to \mathbf{S}_n$ which becomes an isomorphism after $n-2$ contractions (see \cite[\S 2.3]{Asok12b} for this terminology);
	\item[(2)] there is a canonical epimorphism $\mathbf{S}_n\to \KM_n/2$ such that the composite \[
	\KM_n/(n-1)!\to \mathbf{S}_n\to  \KM_n/2
	\] is reduction modulo $2$.
	\item[(3)] the fiber product is taken over the epimorphism $\mathbf{S}_{n+1}\to \KM_{n+1}/2$ and a sheafified version of Milnor's homomorphism $\mathbf{I}^{n+1}\to \KM_{n+1}/2$.
\end{itemize}
Moreover, if $k$ admits a complex embedding, then the map $\bpi_{n,n+1}^{\aone}(\op{BGL}_n)(\cplx) \longrightarrow \pi_{2n+1}(BU(n)) \cong \Z/n!$ induced by complex realization is an isomorphism.
\end{thm}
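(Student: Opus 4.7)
The plan is to derive the statement from the long exact sequence of $\A^1$-homotopy sheaves attached to the $\A^1$-fiber sequence \eqref{eqn:stabilization}. Since $\A^{n+1}\setminus 0$ is $\A^1$-$(n-1)$-connected, the sequence collapses near the edge to
\[
\piaone_n(\A^{n+1}\setminus 0)\longrightarrow \piaone_n(\mathrm{BGL}_n)\longrightarrow \piaone_n(\mathrm{BGL}_{n+1})\longrightarrow 0.
\]
The right-hand term lies in the stable range $n\leq (n+1)-1$, hence by the Morel--Voevodsky identification of $\op{BGL}$ with the K-theory space it coincides with $\K^Q_n$; and by Morel's computation of the first non-vanishing $\A^1$-homotopy sheaf of a motivic sphere, the left-hand term is $\K^{MW}_{n+1}$. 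This yields the basic exact sequence $\K^{MW}_{n+1}\to\piaone_n(\mathrm{BGL}_n)\to\K^Q_n\to 0$, and reduces the theorem to describing the image $\mathbf{T}_{n+1}$ of the first map as a quotient of $\K^{MW}_{n+1}$.

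For this identification, I would exploit Morel's Cartesian presentation $\K^{MW}_{n+1}\iso \KM_{n+1}\times_{\KM_{n+1}/2}\mathbf{I}^{n+1}$ and analyze each factor separately. The Suslin matrices provide an explicit algebro-geometric element of $\piaone_{n,n+1}(\mathrm{BGL}_n)$ which, after sufficiently many contractions, sends a generic symbol $\{u_1,\ldots,u_{n+1}\}$ to something vanishing precisely modulo $(n-1)!$; this produces the sheaf $\mathbf{S}_{n+1}$ of property~(1), with the mod~$2$ reduction of property~(2) falling out of the construction, and property~(3) being the tautological matching of the two quotient maps with the sheafified Milnor homomorphism. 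The parity dichotomy reflects the behaviour of the Hopf element $\eta$: for odd $n$ the Witt-theoretic summand $\mathbf{I}^{n+1}$ already lies in the kernel of $\K^{MW}_{n+1}\to\piaone_n(\mathrm{BGL}_n)$, while for even $n$ it survives and contributes the fiber product factor $\mathbf{S}_{n+1}\times_{\KM_{n+1}/2}\mathbf{I}^{n+1}$.

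The main obstacle is pinning down the integer $(n-1)!$, and I would argue by a two-sided bound. For the upper bound, the complex realization map sends $\K^{MW}_{n+1}$ to $\pi_{2n+1}(BU(n))\cong\Z/n!$ by the classical Borel--Hirzebruch--Bott divisibility result; transporting this constraint back to the motivic setting via contractions of $\K^{MW}_{n+1}$ bounds the possible image. For the lower bound, the Suslin matrix construction is known to realize a generator of $\pi_{2n}(U(n))$ on $\cplx$-points, so it exhibits enough elements of $\piaone_n(\mathrm{BGL}_n)$ to saturate the bound coming from topology. Finally, the concluding assertion that $\bpi_{n,n+1}^{\aone}(\mathrm{BGL}_n)(\cplx)\to\pi_{2n+1}(BU(n))\cong\Z/n!$ is an isomorphism follows directly from the description of $\mathbf{T}_{n+1}$ just obtained: on $\cplx$-points and in the relevant weight, the stable $\K^Q_n$-summand realizes into a vanishing topological group, so the realization map is detected entirely by $\mathbf{T}_{n+1}(\cplx)$, where both bounds above become equalities.
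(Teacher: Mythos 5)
Your proposal follows essentially the same architecture that the survey outlines for this theorem (and which is carried out in \cite{Asok12b}): pass to the edge of the long exact sequence of $\A^1$-homotopy sheaves associated to \eqref{eqn:stabilization}, identify $\piaone_n(\A^{n+1}\setminus 0)\cong\K^{MW}_{n+1}$ via Morel and $\piaone_n(\op{BGL}_{n+1})\cong\K^Q_n$ via stability, and then identify the image of $\K^{MW}_{n+1}\to\piaone_n(\op{BGL}_n)$ using the pullback presentation of Milnor--Witt K-theory, Suslin matrices, and complex realization, with the parity dichotomy governed by the $\eta$-action. (One small imprecision here: $\mathbf{I}^{n+1}$ is a fiber-product factor, not a direct summand, of $\K^{MW}_{n+1}$, so ``$\mathbf{I}^{n+1}$ lies in the kernel'' should really be ``for $n$ odd the map factors through the projection $\K^{MW}_{n+1}\to\KM_{n+1}$, killing $\ker(\K^{MW}_{n+1}\to\KM_{n+1})\cong\mathbf{I}^{n+2}$''.)

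The genuine gap is in your ``two-sided bound.'' You argue that complex realization together with the fact that Suslin matrices realize a topological generator pins down the image of $\KM_{n+1}$ in $\piaone_n(\op{BGL}_n)$ as $\KM_{n+1}/n!$ on the nose. But a strictly $\A^1$-invariant sheaf over a field is not determined by its sections over $\Spec\cplx$ (nor, after weight-shifting, by the $\cplx$-points of finitely many of its contractions), and a class realizing a topological generator need not generate the sheaf-theoretic image. This is exactly why the theorem is stated so carefully: $\mathbf{S}_n$ is \emph{defined} to be the actual image, and one proves only a canonical epimorphism $\KM_n/(n-1)!\to\mathbf{S}_n$ that becomes an isomorphism after $n-2$ contractions. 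Promoting that epimorphism to an isomorphism without contracting is precisely Suslin's factorial conjecture, which the paper records as open beyond small $n$. Read literally, your sandwich argument would prove that conjecture. What the realization argument actually yields is the post-contraction isomorphism and the final assertion of the theorem about $\bpi_{n,n+1}^{\aone}(\op{BGL}_n)(\cplx)\to\pi_{2n+1}(BU(n))\cong\Z/n!$; the proof of the unstable sheaf-level statement goes through Suslin's algebraic analysis of the Hurewicz map, not through topology.
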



Bott's refinement of the theorem of Borel--Hirzebruch turns out to have an algebro-geometric interpretation.  Indeed, in joint work with T.B. Williams \cite{AFWSuslinConj} we showed that $\mathbf{S}_n$ can described using a ``Hurewicz map'' analyzed by Andrei Suslin \cite{SuslinHurewicz}.  Suslin's conjecture on the image of this map is equivalent to the following conjecture.

\begin{conj}[Suslin's factorial conjecture]
	The canonical epimorphism $\KM_n/(n-1)!\to \mathbf{S}_n$ is an isomorphism.
\end{conj}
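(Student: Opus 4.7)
The plan is to reduce Suslin's factorial conjecture on sheaves to his original conjecture on the image of the Hurewicz homomorphism $h_n \colon H_n(\op{GL}(F),\Z) \to \KM_n(F)$, and then attempt to attack the latter through the $\aone$-fiber sequence \eqref{eqn:stabilization}. First I would exploit the fact that both $\KM_n/(n-1)!$ and $\mathbf{S}_n$ are strictly $\aone$-invariant sheaves on $\Sm_k$. By Morel's structure theory for such sheaves (Section~\ref{subsec:strictlyA1invariant}), a map between strictly $\aone$-invariant sheaves on $\Sm_k$ over a perfect field is an isomorphism if and only if its sections over every finitely generated field extension $F/k$ are isomorphisms. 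Since the map in question is already known to be an epimorphism, this reduces the problem to checking that $\KM_n(F)/(n-1)! \to \mathbf{S}_n(F)$ is injective for every such $F$.

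Second, using the identification from \cite{AFWSuslinConj} of $\mathbf{S}_n$ in terms of a Suslin-type Hurewicz map, the sections $\mathbf{S}_n(F)$ can be rewritten as the cokernel of a map originating in $\K^Q_n(F)$ (equivalently, in group homology of $\op{GL}(F)$). Under this dictionary, the factorial conjecture translates into the statement that $\op{im}(h_n) = (n-1)! \cdot \KM_n(F)$. The containment $\op{im}(h_n) \subseteq (n-1)!\cdot \KM_n(F)$ is due to Suslin himself, so the substance lies in constructing enough classes in $H_n(\op{GL}(F),\Z)$ whose Hurewicz image generates $(n-1)!\cdot \KM_n(F)$.

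Third, to produce such classes motivically I would analyze the long exact sequence of $\aone$-homotopy sheaves attached to \eqref{eqn:stabilization}, together with Morel's identification $\piaone_n(\A^{n+1}\setminus 0) \cong \K^{MW}_{n+1}$. The Suslin matrices give an explicit algebro-geometric lift of the generator of $\pi_{2n+1}(\op{B}U(n)) \cong \Z/n!$ under complex realization, which by Theorem~\ref{thm:firstnonstableBGLn} matches $\bpi^{\aone}_{n,n+1}(\op{BGL}_n)(\cplx)$ exactly. The hope is to combine a careful accounting of the Milnor--Witt K-theoretic connecting map with the explicit cycles coming from Suslin matrices to witness enough elements of $\mathbf{S}_n(F)$ detectable by Milnor K-theory modulo $(n-1)!$, ideally by an induction on $n$ that leverages the fact (noted in the theorem) that the map is already an isomorphism after $n-2$ contractions.

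The main obstacle is that Suslin's Hurewicz conjecture is itself open in general; outside of small $n$ or very special fields essentially no progress has been made for decades, and controlling the relevant group-homology classes of $\op{GL}(F)$ appears to require genuinely new input. A complete argument would plausibly need either a substantial advance in computing $H_n(\op{GL}(F),\Z)$ in the unstable range, or a purely motivic construction that bypasses group homology entirely by exhibiting, for each generator of $\KM_n(F)/(n-1)!$, an explicit morphism $\Spec F \to \op{BGL}_n$ whose $n$th $\aone$-homotopy invariant hits it; this seems to be the genuinely hard step.
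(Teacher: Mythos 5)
This statement is an open conjecture --- the paper offers no proof, and the remark immediately following it only records known cases ($n=2$ is tautological, $n=3$ is equivalent to the Milnor conjecture on quadratic forms, and $n=5$ is settled in ``most'' cases in \cite{AFWSuslinConj} using \cite{RSO}). So there is no paper proof to compare against. Your sketch does locate the right framework: reducing to sections over finitely generated field extensions and then translating into Suslin's Hurewicz conjecture via the identification of $\mathbf{S}_n$ from \cite{AFWSuslinConj}; and you are candid that the remaining step is exactly the open problem, which is the honest assessment.

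One substantive error, though: you have the two inclusions reversed, and the error changes what a proof would need to establish. Suslin proved that the composite $\KM_n(F) \to \K^Q_n(F) \to \KM_n(F)$ is, up to sign, multiplication by $(n-1)!$; this gives the known containment $\op{im}(h_n) \supseteq (n-1)!\cdot\KM_n(F)$, which is precisely why the epimorphism $\KM_n/(n-1)! \to \mathbf{S}_n$ exists in the first place. The open content of the conjecture is the reverse inclusion $\op{im}(h_n) \subseteq (n-1)!\cdot\KM_n(F)$: one must bound the Hurewicz image from above. Your proposed plan --- constructing enough classes in $H_n(\op{GL}(F),\Z)$ to hit all of $(n-1)!\cdot\KM_n(F)$ --- therefore addresses the direction that is already a theorem and does not touch the open one. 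The genuinely hard step is to exclude extra classes, and neither the Suslin matrices nor the stabilization fiber sequence obviously furnish such an upper bound in the unstable range.
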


\begin{rem}
The conjecture holds tautologically for $n=2$.  For $n=3$, Suslin observed the conjecture was equivalent to the Milnor conjecture on quadratic forms, which was resolved later independently by Merkurjev--Suslin and Rost.  The conjecture was established for $n=5$ in ``most'' cases in \cite{AFWSuslinConj} (see the latter for a precise statement); this work relies heavily on the computation by \O stv\ae r-R\"ondigs-Spitzweck of the motivic stable $1$-stem \cite{RSO}.
\end{rem}

\subsection{Splitting bundles, Euler classes and cohomotopy}
Morel's computations around ${\mathbb A}^n \setminus 0$ in conjunction with the fiber sequences of \eqref{eqn:stabilization} allow a significant improvement of Serre's celebrated splitting theorem for smooth affine varieties over a field that we stated in the introduction.

\begin{proof}[Proof of the motivic Serre Splitting Theorem~\ref{thm:motivicSerre}]
Suppose $X$ is a smooth affine $k$-variety having $\aone$-cohomological dimension $\leq d$, and suppose $\xi: X \to \op{BGL}_r$ classifies a rank $r > d$ vector bundle on $X$.  We proceed by analyzing the $\aone$--Moore--Postnikov factorzation of the stabilization map  \eqref{eqn:stabilization} with $n = {r-1}$.  In that case, combining the fact that ${\mathbb A}^{r} \setminus 0$ is $\aone$-$(r-2)$-connected and the $\aone$-cohomological dimension assumption on $X$ one sees all obstructions to splitting vanish.
\end{proof}


\begin{rem}
	The proof of this result does not rely on the Serre splitting theorem.  Since $\aone$-cohomological dimension can be strictly smaller than Krull dimension (Example~\ref{ex:aonecohomologicaldimensionstrictbound}),  this statement is strictly stronger than Serre splitting.  Importantly, the improvement achieved here seems inaccessible to classical techniques.
\end{rem}

The algebro-geometric splitting problem in corank $0$ on smooth affine varieties of dimension $d$ over a field $k$ has been analyzed by many authors.  When $k$ is an algebraically closed field, M.P. Murthy proved that the top Chern class in Chow groups is the only obstruction to splitting \cite{Murthy94}.  When $k$ is not algebraically closed, vanishing of the top Chern class is known to be insufficient to guarantee splitting, and Nori proposed some ideas to analyze this situation.  His ideas led Bhatwadekar and Sridharan \cite{BhatwadekarSridharan} to introduce what they called Euler class groups and to provide one explicit ``generators and relations'' answer to this question.  At the same time, F. Morel proposed an approach to the splitting problem in corank $0$, which we recall here.

\begin{thm}[Morel's splitting theorem {\cite[Theorem 1.32]{Morel08}}]
	\label{thm:morelsplitting}
Assume $k$ is a field and $X$ is a smooth affine $k$-variety of $\aone$-cohomological dimension $\leq d$.  If $\mathcal{E}$ is a rank $d$ vector bundle on $X$, then $\mathcal{E}$ splits off a free rank $1$ summand if and only if an Euler class $e(\mathcal{E}) \in H^d(X,\K^{MW}_d(\det \mathcal{E}))$ vanishes.
\end{thm}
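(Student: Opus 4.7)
The plan is to adapt the $\aone$-Moore--Postnikov obstruction argument used to prove Theorem~\ref{thm:motivicSerre} to the stabilization fibration
\[
{\mathbb A}^d \setminus 0 \longrightarrow \op{BGL}_{d-1} \longrightarrow \op{BGL}_d
\]
of \eqref{eqn:stabilization} taken with $n = d-1$. By Theorem~\ref{thm:affinerepresentabilityVB}, the bundle $\mathcal{E}$ splits off a free rank $1$ summand if and only if its classifying map $\xi \colon X \to \op{BGL}_d$ admits an $\aone$-homotopy lift along the stabilization map to $\op{BGL}_{d-1}$. The direction ``splitting implies $e(\mathcal{E}) = 0$'' will be formal once the primary obstruction is identified with $e(\mathcal{E})$: naturality of the $\aone$--Moore--Postnikov $k$-invariants forces any class pulled back through a genuine lift to vanish.

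For the converse, I would run the $\aone$--Moore--Postnikov tower of the stabilization map stage by stage. The computations recalled in Section~\ref{subsec:motivicBorelHirzebruch} give that ${\mathbb A}^d \setminus 0 \simeq S^{d-1,d}$ is $\aone$-$(d-2)$-connected with first non-trivial $\aone$-homotopy sheaf $\bpi_{d-1}^{\aone}({\mathbb A}^d \setminus 0) \cong \K^{MW}_d$, and that the stabilization map induces the determinant isomorphism $\bpi_1^{\aone}(\op{BGL}_{d-1}) \isomto \bpi_1^{\aone}(\op{BGL}_d) \cong \Gm$. Morel's work identifies the induced $\Gm$-action on $\K^{MW}_d$ with the usual $\langle u \rangle$-action, so that pulling back the universal local coefficient system along $\xi$ produces the twisted sheaf $\K^{MW}_d(\det \mathcal{E})$.

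With these inputs in hand, the obstruction bookkeeping is routine. The first potentially non-trivial obstruction occurs at stage $i = d-1$ and lies in $\mathrm{H}^d_{\Nis}(X, \K^{MW}_d(\det \mathcal{E}))$. Every higher obstruction at stage $i \geq d$ lies in $\mathrm{H}^{i+1}_{\Nis}(X, \bpi_i^{\aone}({\mathbb A}^d \setminus 0)(\det \mathcal{E}))$; the coefficient sheaves are strictly $\aone$-invariant by Morel's theorem, and the cohomological degree $i+1 > d$ exceeds $cd_{\aone}(X)$, so each such obstruction vanishes. The indeterminacies in successive lifts are controlled by analogous cohomology groups in degrees $> d$, so they too vanish. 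Consequently, $\xi$ lifts if and only if the single primary obstruction is zero.

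The main obstacle, as one would expect, is the identification of this primary obstruction class with Morel's Euler class $e(\mathcal{E})$. This amounts to a universal-example computation: one must show that the first $k$-invariant of the stabilization fibration, evaluated on the tautological rank $d$ bundle over a convenient model for $\op{BGL}_d$, coincides with the class defined via the $\aone$-Thom class of the bundle, as in Morel's construction. Once this identification is in place, combining it with the obstruction analysis above concludes the proof.
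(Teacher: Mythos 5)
Your proposal is correct and matches the approach the paper takes: the paper cites Morel's theorem directly and in the remark immediately following identifies the Euler class with the first nonvanishing obstruction class in the $\aone$--Moore--Postnikov tower, which is exactly the bookkeeping you carry out. Your acknowledgement that identifying the primary $k$-invariant with Morel's Thom-class construction of $e(\mathcal{E})$ is the genuinely substantive step is well-placed; this comparison is precisely the content addressed in the cited reference \cite{AsokFaselComparison}.
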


 
\begin{rem}
The Euler class of Theorem~\ref{thm:morelsplitting} is precisely the first non-vanishing obstruction class, as described in Paragraph~\ref{entry:aoneMoorePostnikov}.
A related ``cohomological" approach to the splitting problem in corank 0 was proposed by Barge--Morel \cite{BargeMorel} and analyzed in the thesis of the second author \cite{FaselChowWitt}.  The cohomological approach was in most cases shown to be equivalent to the ``obstruction theoretic'' approach in \cite{AsokFaselComparison}. 
\end{rem}

The following result shows that the relationship between Euler classes \`a la Bhatwadekar--Sridharan and Euler classes \`a la Morel is mediated by another topologically inspired notion: cohomotopy (at least for bundles of trivial determinant).

\begin{thm}[{\cite[Theorem 1]{AsokFaselDas}}]
Suppose $k$ is a field, $n$ and $d$ are integers, $n \geq 2$, and $X$ is a smooth affine $k$-scheme of dimension $d \leq 2n-2$.  Write $\mathrm{E}^n(X)$ for the Bhatwadekar--Sridharan Euler class group.
\begin{itemize}
	\item The set $[X,Q_{2n}]_{\aone}$ carries a functorial abelian group structure;
	\item There are functorial homomorphisms:
	\[
	\mathrm{E}^n(X) \stackrel{s}{\longrightarrow} [X,Q_{2n}]_{\aone} \stackrel{h}{\longrightarrow } \op{H}^n(X,\K^{MW}_n)
	\]
	where the ``Segre class'' homomorphism $s$ is surjective and an isomorphism if $k$ is infinite and $d \geq 2$, and the Hurewicz homomorphism $h$ is an isomorphism if $d \leq n$.
\end{itemize}	
\end{thm}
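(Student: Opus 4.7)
The plan is to reduce to motivic cohomotopy of $S^{n,n}$ via the $\aone$-weak equivalence $Q_{2n} \weq S^{n,n}$ of Example~\ref{ex:evenquadric}, and then combine three ingredients: an Eckmann--Hilton argument for the abelian group structure, obstruction theory in the $\aone$--Moore--Postnikov tower for the Hurewicz map $h$, and a Pontryagin--Thom style identification of Bhatwadekar--Sridharan generators with naive $\aone$-homotopy classes for the Segre map $s$.

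The abelian group structure is essentially formal: since $n \geq 2$, the identification $S^{n,n} \weq S^1 \sma S^1 \sma (S^{n-2} \sma \gm{\sma n})$ realizes $Q_{2n}$ (up to $\aone$-weak equivalence) as a double simplicial suspension, so $[X, Q_{2n}]_{\aone}$ inherits two a priori distinct cogroup structures that coincide and are abelian by the standard Eckmann--Hilton argument; functoriality in $X$ is automatic. Turning to the Hurewicz map, Morel's theorem identifies $\bpi_{n,n}^{\aone}(S^{n,n})$ with $\K^{MW}_n$, so the bottom stage of the $\aone$--Postnikov tower of $S^{n,n}$ is the Eilenberg--Mac Lane space $K(\K^{MW}_n, n)$, and $h$ is induced by the canonical projection to this stage. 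Analyzing the $\aone$--Moore--Postnikov tower as in Paragraph~\ref{entry:aoneMoorePostnikov}, the obstructions to lifting a class in $\op{H}^n(X, \K^{MW}_n)$ back to $[X, S^{n,n}]_{\aone}$, and the indeterminacies in such lifts, are valued in groups of the form $\op{H}^{n+j}(X, \bpi_{n+j-1, n}^{\aone}(S^{n,n}))$ with $j \geq 1$; these all vanish once $d \leq n$, so $h$ is bijective.

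For $s$, I would first use Theorem~\ref{thm:naivequadrics} to replace $[X, Q_{2n}]_{\aone}$ by naive $\aone$-homotopy classes of scheme morphisms $X \to Q_{2n}$. A generator of $\mathrm{E}^n(X)$ is a pair $(Z, \omega)$ with $Z \subset X$ a codimension $n$ local complete intersection and $\omega$ an equivalence class of generators of its ideal sheaf; the chosen generators define an explicit morphism $X \to Q_{2n}$ whose scheme-theoretic fiber over the distinguished base point is $Z$. The defining relations of $\mathrm{E}^n(X)$ (trivial addition of generators, change of generators by an elementary matrix) become naive $\aone$-homotopies by direct construction, so $s$ is a well-defined homomorphism; surjectivity follows by putting any representative $X \to Q_{2n}$ in general position with respect to the basepoint to expose its fiber as such a complete intersection.

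The main obstacle is the injectivity of $s$ under the hypotheses that $k$ is infinite and $d \geq 2$. One must show that a naive $\aone$-homotopy $F \colon X \times \aone \to Q_{2n}$ between two Bhatwadekar--Sridharan representatives can be rewritten as a finite sequence of elementary moves in the Euler class group. The technical heart is to place $F$ in sufficiently transverse position relative to the basepoint of $Q_{2n}$: infinitude of $k$ supplies the sections needed for a Bertini-type argument to achieve generic transversality, while the bound $d \geq 2$ keeps the degeneration loci of the resulting one-parameter family of codimension $n$ subschemes of $X$ in admissible codimension, so that the family decomposes into a chain of generating relations in $\mathrm{E}^n(X)$. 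This general-position analysis is the crux of the theorem, and follows the template of the analogous arguments in \cite{BhatwadekarSridharan} and \cite{AsokFaselComparison}.
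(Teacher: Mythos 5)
The argument for the abelian group structure on $[X,Q_{2n}]_{\aone}$ has the variance of the Eckmann--Hilton argument backwards, and this is a genuine gap, not a phrasing issue. Realizing $Q_{2n}\weq S^{n,n}$ as a double simplicial suspension equips it with a (co)group structure in the homotopy category; this co-$H$-structure makes $[Q_{2n},\mathscr{Y}]_{\aone}$ a group for any target $\mathscr{Y}$, with abelianness and coincidence of the two co-multiplications following from Eckmann--Hilton. It does \emph{not} produce a group structure on $[X,Q_{2n}]_{\aone}$, where $Q_{2n}$ sits in the target. To get an abelian group structure on cohomotopy, one needs either an $H$-space structure on the target (which motivic spheres do not carry) or, as in Borsuk's classical theorem, a connectivity argument: the inclusion $Q_{2n}\vee Q_{2n}\hookrightarrow Q_{2n}\times Q_{2n}$ is highly connected (up to degree roughly $2n-1$), so for $X$ of $\aone$-cohomological dimension $\leq 2n-2$ the restriction map $[X,Q_{2n}\vee Q_{2n}]_{\aone}\to[X,Q_{2n}\times Q_{2n}]_{\aone}$ is a bijection, and the fold map $Q_{2n}\vee Q_{2n}\to Q_{2n}$ then defines the addition. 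This is precisely the ``algebro-geometric variant of Borsuk's group structure on cohomotopy'' that the remark after the theorem alludes to. Notice that your sketch never uses the dimension hypothesis $d\leq 2n-2$ in establishing the group structure (you only invoke $n\geq 2$) — this is the tell that something is off, since that bound is exactly the metastable range needed for the Borsuk construction.

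The remaining two points are reasonable in outline. Your description of the Hurewicz map $h$ as the truncation $S^{n,n}\to\tau_{\leq n}S^{n,n}\weq K(\K^{MW}_n,n)$ arising from Morel's connectivity and computation of $\bpi_n^{\aone}(S^{n,n})$, with bijectivity for $d\leq n$ following from vanishing of the higher Postnikov obstructions and indeterminacies, is the right picture (take care with the indexing: surjectivity of $h$ already holds when $d\leq n+1$, while injectivity needs $d\leq n$ because the first indeterminacy lives in $\op{H}^{n+1}(X,\bpi_{n+1}^{\aone}(S^{n,n}))$). Your Segre-class strategy — pass to naive $\aone$-homotopy classes via Theorem~\ref{thm:naivequadrics}, send a Bhatwadekar--Sridharan pair $(Z,\omega)$ to the explicit morphism $X\to Q_{2n}$ cut out by the chosen generators, and handle the relations and injectivity by transversality and moving arguments over an infinite field — is the correct template, though the transversality and decomposition-into-elementary-moves steps are where the real technical work of \cite{AsokFaselDas} lies, and you correctly flag them as the crux. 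But neither $s$ nor $h$ can be called a homomorphism until the group structure is established, so the gap in the first point propagates to the rest of the statement.
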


\begin{rem}
The group structure on $[X,Q_{2n}]_{\aone}$ is an algebro-geometric variant of Borsuk's group structure on cohomotopy.  The second point of the statement includes the algebro-geometric analog of the Hopf classification theorem from topology.
\end{rem}

\subsection{The next nontrivial $\aone$-homotopy sheaf of spheres}
In Section~\ref{subsec:corank} we described a cohomological approach to the splitting problem in corank $1$ for smooth closed manifolds of dimension $d$; this approach relied on the computation of $\pi_{d}(S^{d-1})$.  In order to analyze the algebro-geometric splitting problem in corank $1$ using the $\aone$--Moore--Postnikov factorization we will need as input further information about the homotopy sheaves of ${\mathbb A}^d \setminus 0$.  We now describe known results in this direction.  For technical reasons, we assume $2$ is invertible in what follows.

\subsubsection{The $KO$-degree map}\label{subsec:KOdegree}
In classical algebraic topology, all of the ``low degree'' elements in the homotopy of spheres can be realized by constructions of ``linear algebraic'' nature.  The situation in algebraic geometry appears to be broadly similar.  The first contribution to the ``next'' non-trivial homotopy sheaves of motivic spheres requires recalling the geometric formulation of Bott periodicity for Hermitian K-theory given by Schlichting--Tripathi.  

We write $\mathrm{O}$ for the infinite orthogonal group.  In topology, Bott periodicity identifies the $8$-fold loop space of $\mathrm{O}$ with itself and identifies the intermediate loop spaces in concrete geometric terms.  In algebraic geometry, Schlichting and Tripathi proved that the $4$-fold $\pone$-loop space $\Omega_{\pone}^4 \mathrm{O}$ also coincides with $\mathrm{O}$ and realized suitable intermediate loop spaces: $\Omega_{\PP^1}^n \mathrm{O}$ is isomorphic to $\op{GL}/\op{O}$ when $n = 1$, $\op{Sp}$ when $n = 2$ and $\op{GL}/\op{Sp}$ when $n = 3$, where $\mathrm{Sp}$ is the stable sympletic group, $\mathrm{GL}/\mathrm{O}$ is the ind-variety of invertible symmetric matrices, and  $\mathrm{GL}/\mathrm{Sp}$ is the ind-variety of invertible skew-symmetric matrices {\cite[Theorems 8.2 and 8.4]{Schlichting13}}.

A slight modification of the Suslin matrix construction \cite[Lemma 5.3]{SuslinStablyFree} yields a map
\[
u_n:Q_{2n-1} \longrightarrow \Omega^{-n}_{\pone}O
\]
called the \emph{(unstable) $\mathrm{KO}$-degree map in weight $n$} that was analyzed in detail in \cite{AFKODegree}.  The terminology stems from the fact that this map stabilizes to the ``unit map from the sphere spectrum to the Hermitian K-theory spectrum'' in an appropriate sense.  The scheme $Q_{2n-1}$ is $\A^1$-$(n-2)$-connected by combining the weak equivalence of Example~\ref{ex:oddquadric} and Morel's connectivity results for ${\mathbb A}^n \setminus 0$.  Thus, $u_n$ factors through the $\aone$-$(n-2)$-connected cover of $\Omega^{-n}_{\pone}O$. 

Taking homotopy sheaves on both sides, there are induced morphisms: 
\[
\piaone_{i}(u):\piaone_{i}(Q_{2n-1})\to \piaone_{i}(\Omega^{-n}_{\pone}O).
\]
This homomorphism is trivial if $i < n-1$ by connectivity estimates.  If $i = n-1$, via Morel's calculations one obtains a morphism $\K^{\mathrm{MW}}_n\to \GW^n_n$ whose sections over finitely generated field extensions of $k$ can be viewed as a quadratic enhancement of the ``natural'' map from Milnor $K$-theory to Quillen $K$-theory defined by symbols; we will refer to it as the natural homomorphism (the natural homomorphism is known to be an isomorphism if $n \leq 4$; the case $n \leq 2$ is essentially Suslin's, $n = 3$ is \cite[Theorem 4.3.1]{AFKODegree}, and $n = 4$ is unpublished work of O. R\"ondigs). 


When $i=n$, we obtain a morphism:
\[
\piaone_n({\mathbb A}^n \setminus 0) \cong \piaone_n(Q_{2n-1})\to \piaone_n(\Omega^{-n}_{\pone}O) \cong \GW_{n+1}^n,
\]
where the right-hand term is by definition a higher Grothendieck--Witt sheaf (obtained by sheafifying the corresponding higher Grothendieck-Witt presheaf on $\Sm_k$).  The above map is an epimorphism for $n = 2,3$ and it follows from these observations that the morphism is an epimorphism after $(n-3)$ contractions (\cite[Theorem 4.4.5]{AFKODegree}). 

\subsubsection{The motivic J-homomorphism}
The classical J-homomorphism has an algebro-geometric counterpart that yields the second contribution to the ``next'' homotopy sheaf of motivic spheres.  The standard action of $\op{SL}_n$ on ${\mathbb A}^n$ extends to an action on the one-point compactification ${\mathbb P}^n/{\mathbb P}^{n-1}$.  The latter space is a motivic sphere ${\pone}^{\sma n}$ and thus one obtains a map
\[
\Sigma_{{\pone}}^n SL_n \longrightarrow {\pone}^{\sma n}. 
\]
As $SL_n$ is $\aone$-connected, it follows that $\Sigma^n_{\pone} SL_n$ is $\A^1-n$-connected.

The first non-vanishing $\aone$-homotopy sheaf appears in degree $n+1$; for $n = 2$, it is isomorphic to $\K^{MW}_4$, while for $n \geq 3$ it is isomorphic to $\K^M_{n+2}$; this follows from $\aone$-Hurewicz theorem combined with \cite[Proposition 3.3.9]{AWW} using the fact that $\bpi_1^{\aone}(SL_n) = \K^M_2$ for $n \geq 3$ and properties of the $\aone$-tensor product \cite[Lemma 5.1.8]{AWW}.


Combining the above discussion with that of the previous section, we see that for $n \geq 3$, we may consider the composite maps $\K^M_{n+2} \to \bpi_{n+1}^{\aone}({\pone}^{\sma n}) \to \mathbf{GW}^n_{n+1}$; this composite is known to be zero, but the map induced by the J-homomorphism fails to be injective.  Instead, it factors through a morphism
\[
\K^M_{n+2}/24 \longrightarrow \bpi_{n+1}^{\aone}({\pone}^{\sma n}) \longrightarrow \mathbf{GW}^n_{n+1}.
\]
Furthermore, the map on the right fails to be surjective.  The unstable description above is not present in the literature, but it is equivalent to the results stated in \cite{AFWSuslinConj}.  In \cite{RSO}, the stable motivic $1$-stem was computed in the terms above: the above sequence is exact on the left stably.  The next result compares the unstable group to the corresponding stable group.




\begin{thm}
	\label{thm:pinanminus0}
For any integer $n \geq 3$, the kernel $\mathbf{U}_{n+1}$ of the stabilization map
\[
\bpi_{n+1}^{\aone}({\pone}^{\sma n}) \longrightarrow \bpi_{n+1}^{\aone}(\Omega^{\infty}_{\pone} \Sigma^{\infty}_{\pone} {\pone}^{\sma n})
\]
is a direct summand; the stabilization map is an isomorphism if $n = 3$, i.e. $\mathbf{U}_4 = 0$.
\end{thm}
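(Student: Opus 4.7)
The plan is to compare the unstable complex
\[
\K^M_{n+2}/24 \longrightarrow \bpi_{n+1}^{\aone}((\pone)^{\sma n}) \longrightarrow \GW^n_{n+1}
\]
constructed in the preceding subsections -- arising from the motivic $J$-homomorphism on the left and from the Hurewicz image of the $\mathrm{KO}$-degree map $u_n$ on the right -- with its stable analog, which is exact on the left by R\"ondigs--\O stv\ae r--Spitzweck \cite{RSO}. Naturality of both constructions under $\pone$-suspension makes this comparison commutative, and $\mathbf{U}_{n+1}$ sits as the subsheaf of classes killed by stabilization.

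For the direct summand claim, the strategy is to produce a natural section of the quotient $\bpi_{n+1}^{\aone}((\pone)^{\sma n}) \twoheadrightarrow \bpi_{n+1}^{\aone}((\pone)^{\sma n})/\mathbf{U}_{n+1}$. The unstable $J$-homomorphism provides a lift of the $\K^M_{n+2}/24$ summand: since the composite into the stable group is injective, its unstable image is already a subsheaf on which the stabilization is injective. For the complementary piece, the unstable $\mathrm{KO}$-degree map realizes the Hurewicz class in $\GW^n_{n+1}$; for any class in the image of the stabilization that is detected in $\GW^n_{n+1}$, the $u_n$-construction furnishes a canonical unstable lift. These two lifts should assemble into a natural splitting of the surjection onto the image, exhibiting $\mathbf{U}_{n+1}$ as a direct summand.

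For the case $n=3$, the plan is to prove both the left and right maps in the unstable complex are, respectively, injective and surjective. Surjectivity of $\bpi_4^{\aone}((\pone)^{\sma 3}) \twoheadrightarrow \GW^3_4$ is already recorded in Subsection~\ref{subsec:KOdegree}, since the $\mathrm{KO}$-degree map is an epimorphism on $\bpi_n^{\aone}$ for $n=2,3$. For injectivity of the unstable $J$-map $\K^M_5/24 \to \bpi_4^{\aone}((\pone)^{\sma 3})$, the approach is to leverage that $(\pone)^{\sma 3} = S^{3,3}$ is $\aone$-$2$-connected, placing $\bpi_4$ in the first non-stable degree; motivic Freudenthal-type results for $S^1$-suspension, combined with the stable injectivity provided by \cite{RSO}, then force the desired unstable injectivity. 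Combining these two points with the commutative diagram above yields $\mathbf{U}_4 = 0$.

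The main obstacle is producing the section naturally rather than abstractly: the splitting must be a morphism of sheaves of abelian groups compatible with transfers and the $\KMW_*$-module structure. Concretely, this amounts to verifying vanishing of an $\mathrm{Ext}^1$-class between the relevant subsheaf of $\GW^n_{n+1}$ and $\K^M_{n+2}/24$ in the category of strictly $\aone$-invariant sheaves, which requires nontrivial input from the structure theory of such sheaves. A subsidiary difficulty specific to $n=3$ is that $\pone$-stabilization conflates the clean $S^1$-Freudenthal range with the more delicate $\Gm$-stabilization, so unstable injectivity of the $J$-map is not a formal consequence of Freudenthal and must be extracted from finer comparison with the stable 1-stem computation.
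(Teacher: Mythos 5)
The paper is a survey and does not reproduce a proof of this theorem; the argument lives in the cited references (notably \cite{AFpi3a3minus0}, and the result is of the same genre as the $\bpi_3^{\aone}(\A^3\setminus 0)$ computation whose abbreviation labels the theorem). So I will assess your sketch on its own terms, and there are genuine gaps.

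First, the mechanism you propose for building the section does not work as stated. The $\mathrm{KO}$-degree map $u_n$ is a map \emph{out of} $(\pone)^{\wedge n}$ into a Hermitian $K$-theory space; on $\bpi_{n+1}^{\aone}$ it therefore provides a quotient onto a subsheaf of $\GW^n_{n+1}$, not a lifting device back into $\bpi_{n+1}^{\aone}((\pone)^{\wedge n})$. When you write that ``for any class in the image of the stabilization that is detected in $\GW^n_{n+1}$, the $u_n$-construction furnishes a canonical unstable lift,'' you are asserting exactly the section you were supposed to construct, and the direction of $u_n$ runs against you. Moreover, you are implicitly identifying the target of the stabilization map $\bpi_{n+1}^{\aone}(\Omega^{\infty}_{\pone}\Sigma^{\infty}_{\pone}(\pone)^{\wedge n})$ with $\GW^n_{n+1}$; these are different objects (stable motivic homotopy of the sphere versus Grothendieck--Witt), related by the unit map to Hermitian $K$-theory but not equal, so the ``complementary piece'' you want to split off is not literally the GW-sheaf.

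Second, you correctly flag the two serious technical points---vanishing of the relevant $\mathrm{Ext}^1$ in strictly $\aone$-invariant sheaves, and the failure of a naive $\Gm$-Freudenthal argument for $n=3$---but you do not resolve either; the remark that unstable injectivity of the $J$-map ``must be extracted from finer comparison with the stable 1-stem computation'' is a restatement of the problem, not a proof. A robust route to the direct-summand statement that you do not mention is the motivic James splitting / EHP fiber sequence for $(\pone)^{\wedge n}$: this provides a structural reason for stabilization to admit a retraction in the metastable range, without needing to solder together $J$ and $u_n$ by hand. And the $n=3$ vanishing $\mathbf{U}_4 = 0$ is, as far as the cited sources indicate, the output of the explicit computation of $\bpi_3^{\aone}(\A^3\setminus 0)$ rather than a formal consequence of $S^1$-Freudenthal plus \cite{RSO}. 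As it stands, your proposal identifies the relevant landmarks but supplies neither a working construction of the splitting nor the computation that closes the $n=3$ case.
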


\begin{conj}\label{conj:unzero}
For $n \geq 4$, the sheaf $\mathbf{U}_{n+1}$ is zero. 
\end{conj}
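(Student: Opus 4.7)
My plan is to factor the stabilization map as an infinite composite of single $\pone$-suspensions and to show that each individual suspension is injective by means of a motivic analog of the James EHP fibration. This is the same strategy that, in classical topology, gives the effective form of the Freudenthal suspension theorem; the motivic version requires some adaptation.

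First, I would write the stabilization as the sequential colimit of the $\pone$-loop/suspension unit maps
\[
\bpi_{n+1}^{\aone}(\Omega_{\pone}^{k} \Sigma_{\pone}^{k} {\pone}^{\sma n}) \longrightarrow \bpi_{n+1}^{\aone}(\Omega_{\pone}^{k+1} \Sigma_{\pone}^{k+1} {\pone}^{\sma n}),
\]
so that it suffices to prove each successive map in the tower is injective. Next, I would appeal, with $m = n+k$, to a motivic EHP fiber sequence of the form
\[
{\pone}^{\sma m} \longrightarrow \Omega_{\pone} {\pone}^{\sma (m+1)} \longrightarrow \Omega_{\pone} {\pone}^{\sma (2m+1)},
\]
valid at least after a suitable localization. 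After transposing under the loop-suspension adjunction, the long exact sequence of $\aone$-homotopy sheaves identifies the kernel of the $(k+1)$-st suspension with the image of a motivic Whitehead-product homomorphism whose source is a bigraded $\aone$-homotopy sheaf of the high-dimensional sphere ${\pone}^{\sma(2(n+k)+1)}$.

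The last ingredient is Morel's $\aone$-connectivity theorem, which implies that $\bpi_{p,q}^{\aone}({\pone}^{\sma r}) = 0$ for $p < r$. Bookkeeping of indices in the previous step shows that the relevant source sheaf sits in simplicial degree bounded by $n+k+3$ while the sphere has connectivity $2(n+k)$; thus the source vanishes as soon as $n+k \geq 3$, which holds for all $k \geq 0$ once $n \geq 4$. Hence each successive suspension in the factorization is injective, passage to the colimit yields injectivity of the full stabilization map, and therefore $\mathbf{U}_{n+1} = 0$.

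The principal obstacle is the construction of a motivic EHP fiber sequence. Classically, the James EHP is built from the James model for $\Omega \Sigma X$ together with a $2$-local splitting of $\Sigma J(X)$; adapting these ingredients to the motivic setting is delicate because $\pone$-suspension couples the simplicial and $\gm{}$-directions, and because prime-$2$ phenomena involving $\eta$ and Milnor--Witt $K$-theory must be handled with care. An alternative strategy would sidestep EHP altogether: combine the motivic J-homomorphism and the $\mathrm{KO}$-degree map of Subsection~\ref{subsec:KOdegree} to produce enough elements in the unstable sheaf $\bpi_{n+1}^{\aone}({\pone}^{\sma n})$ to match the stable summand computed in \cite{RSO}, forcing the direct complement $\mathbf{U}_{n+1}$ to vanish.
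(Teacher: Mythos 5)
This statement is a \emph{conjecture} in the paper, not a theorem: no proof is given, and the authors explicitly remark afterward only that ``Conjecture~\ref{conj:unzero} would follow from a suitable version of the Freudenthal suspension theorem for $\pone$-suspension.'' Your proposal is essentially an unpacking of that remark, so the \emph{strategy} is aligned with what the paper suggests; but you have not closed the gap, and the gap is precisely the content of the open problem.

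Concretely: the entire argument rests on the existence of a motivic EHP fiber sequence
\[
{\pone}^{\sma m} \longrightarrow \Omega_{\pone} {\pone}^{\sma (m+1)} \longrightarrow \Omega_{\pone} {\pone}^{\sma (2m+1)}
\]
for $\pone$-suspension. No such sequence is known. The motivic James splitting that is available (Wickelgren--Williams and related work) governs \emph{simplicial} ($S^{1}$) suspension, for which the James filtration works essentially as in topology. For $\pone$-suspension the situation is fundamentally different: $\pone = S^{1} \wedge \gm{}$ mixes the simplicial and Tate directions, the James construction does not model $\Omega_{\pone}\Sigma_{\pone}$, and a James-type splitting of $\Sigma_{\pone}(\Omega_{\pone}\Sigma_{\pone} X)$ is not available. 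Producing a $\pone$-EHP sequence (even after localization) would itself be a major theorem and is, for all practical purposes, \emph{equivalent} to the Freudenthal statement you need, so the proposal is circular at the load-bearing step. (Assuming the sequence exists, your index bookkeeping is fine --- the obstructing sheaf sits in simplicial degree $n+k+3$ against a sphere of $\aone$-connectivity $2(n+k)$, and $n+k+3 \leq 2(n+k)$ already for $n \geq 3$ --- so you are correct that an EHP input would suffice; but it would also be needed to reprove the known case $n=3$, which the paper handles by an entirely different computation in \cite{AFpi3a3minus0}.)

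Your ``alternative strategy'' has a logical gap as well. The motivic J-homomorphism and the $\mathrm{KO}$-degree map produce classes that, by construction, are detected after stabilization; they therefore land in the stable summand and say nothing about the kernel $\mathbf{U}_{n+1}$ of the stabilization map. To conclude $\mathbf{U}_{n+1}=0$ you would need an \emph{upper} bound on $\bpi_{n+1}^{\aone}({\pone}^{\sma n})$ showing the whole sheaf is accounted for by stable classes --- producing elements only gives a lower bound. Upper bounds on unstable homotopy sheaves are exactly what an EHP/Freudenthal-type result would supply, so this route collapses back to the same missing ingredient.
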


\begin{rem}
Conjecture~\ref{conj:unzero} would follow from a suitable version of the Freudenthal suspension theorem for $\pone$-suspension.
\end{rem}


\subsection{Splitting in corank $1$}
Using the results above, we can analyze the splitting problem for vector bundles in corank $1$. The expected result was posed as a question by Murthy (\cite[p. 173]{Murthy99}) which we stated in the introduction as Conjecture~\ref{conj:murthy}.  Murthy's conjecture is trivial if $d = 2$.  In \cite{AFBundle} and \cite{AFpi3a3minus0} we established the following result, which reduces Murthy's question to Conjecture~\ref{conj:unzero}.
 

\begin{thm}
	\label{thm:proofofmurthy}
	Let $X$ be a smooth affine scheme of dimension $d \geq 2$ over an algebraically closed field $k$.  A rank $d-1$ vector bundle $\mathcal{E}$ on $X$ splits off a trivial rank $1$ summand if and only if $c_{d-1}(\mathcal{E}) \in \mathrm{CH}^{d-1}(X)$ is trivial and a secondary obstruction $o_2(\mathcal{E}) \in H^d(X,\bpi_{d-1}^{\aone}({\mathbb A}^{d-1} \setminus 0))$ vanishes.  This secondary obstruction vanishes if $d = 3,4$ or if Conjecture~\ref{conj:unzero} has a positive answer.
\end{thm}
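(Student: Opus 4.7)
The strategy is to apply the $\aone$--Moore--Postnikov tower (see~\ref{entry:aoneMoorePostnikov}) to the stabilization map $f: \op{BGL}_{d-2}\to \op{BGL}_{d-1}$.  A bundle $\mathcal{E}$ of rank $d-1$ is classified by a map $\xi: X\to \op{BGL}_{d-1}$, and splits off a trivial rank $1$ summand exactly when $\xi$ lifts along $f$.  By the fiber sequence~\eqref{eqn:stabilization}, the $\aone$-homotopy fiber of $f$ is $\aone$-equivalent to $\A^{d-1}\setminus 0$, which is $\aone$-$(d-3)$-connected with first non-vanishing $\aone$-homotopy sheaf $\bpi_{d-2}^{\aone}(\A^{d-1}\setminus 0)\cong \KMW_{d-1}$.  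Since $X$ is smooth affine of dimension $d$, its $\aone$-cohomological dimension is at most $d$ by Example~\ref{ex:aonecohomologicaldimensionstrictbound}, so only two stages of the tower can carry non-trivial obstructions.

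The primary obstruction is Morel's twisted Euler class $e(\mathcal{E})\in H^{d-1}(X,\KMW_{d-1}(\det\mathcal{E}))$, exactly as in Theorem~\ref{thm:morelsplitting} applied to the corank-$1$ destabilization fibration.  Because $k$ is algebraically closed, the long exact sequence relating $\KMW_{d-1}$, $\KM_{d-1}$, and $\mathbf{I}^d$-cohomology, together with vanishing of unramified $\mathbf{I}^d$-cohomology of $X$ in the relevant degrees (a consequence of quadratic closure killing the Witt-theoretic contribution), produces an isomorphism
\begin{equation*}
H^{d-1}(X,\KMW_{d-1}(\det\mathcal{E}))\isomto H^{d-1}(X,\KM_{d-1}) = CH^{d-1}(X),
\end{equation*}
under which $e(\mathcal{E})\mapsto c_{d-1}(\mathcal{E})$.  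Thus vanishing of the primary obstruction is equivalent to $c_{d-1}(\mathcal{E})=0$.  Granting this, a choice of lift through the first non-trivial stage yields the secondary obstruction
\begin{equation*}
o_2(\mathcal{E})\in H^d\bigl(X,\bpi_{d-1}^{\aone}(\A^{d-1}\setminus 0)\bigr),
\end{equation*}
well-defined modulo the indeterminacy generated by different choices of lift, which is controlled by a twisted stable cohomology operation (the motivic analog of the classical $Sq^2+w_2\cuppr$ of Section~\ref{subsec:corank}).  No higher obstructions survive since $\dim X=d$, giving the main equivalence.

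For the final vanishing claims, the key tool is the $S^1$-suspension $\Sigma_{S^1}(\A^{d-1}\setminus 0)\weq \pone^{\sma(d-1)}$, which induces a comparison
\begin{equation*}
\sigma:\bpi_{d-1}^{\aone}(\A^{d-1}\setminus 0)\longrightarrow \bpi_d^{\aone}(\pone^{\sma(d-1)}).
\end{equation*}
The target is analyzed through the KO-degree map of Section~\ref{subsec:KOdegree} and the motivic J-homomorphism, with stable part equal to the R\"ondigs--Spitzweck--\O stv\ae r computation of the motivic stable $1$-stem and unstable correction precisely the sheaf $\mathbf{U}_d$ of Theorem~\ref{thm:pinanminus0}.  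For $d=3$, Morel's explicit description of $\bpi_2^{\aone}(\A^2\setminus 0)$ (via $\eta$ and the Suslin matrices), combined with vanishing of the relevant $H^3$ groups on affine threefolds over an algebraically closed field, forces $o_2=0$.  For $d=4$, Theorem~\ref{thm:pinanminus0} gives $\mathbf{U}_4=0$, so $\sigma$ is injective in the relevant range; combined with the explicit computation of $\bpi_3^{\aone}(\A^3\setminus 0)$ from~\cite{AFpi3a3minus0} and Grothendieck--Witt vanishing issuing from quadratic closure, the stable image of $o_2$ vanishes, hence $o_2=0$.  Under Conjecture~\ref{conj:unzero}, the same stability argument applies in every dimension.

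The main obstacle is not showing that the ambient cohomology group $H^d(X,\bpi_{d-1}^{\aone}(\A^{d-1}\setminus 0))$ vanishes---it generally does not---but rather identifying the specific $k$-invariant $o_2(\mathcal{E})$ and proving it lies in the indeterminacy coset.  This requires explicit control over the second $k$-invariant of the $\aone$-Moore-Postnikov tower of $f$, which hinges on understanding how the KO-degree map and the motivic J-homomorphism combine to describe $\bpi_{d-1}^{\aone}(\A^{d-1}\setminus 0)$ \emph{unstably}.  The sheaf $\mathbf{U}_d$ for $d\ge 5$ is precisely what obstructs a purely stable analysis, and Conjecture~\ref{conj:unzero} is the hypothesis that removes this obstruction.
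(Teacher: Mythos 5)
Your setup is correct: the $\aone$--Moore--Postnikov factorization of $\op{BGL}_{d-2}\to\op{BGL}_{d-1}$, the identification of the fiber as $\A^{d-1}\setminus 0$ with first nonvanishing sheaf $\KMW_{d-1}$ in degree $d-2$, the reduction of the primary obstruction (Morel's Euler class) to the Chern class $c_{d-1}(\mathcal{E})$ over an algebraically closed base, and the $S^1$-suspension comparison $\Sigma_{S^1}(\A^{d-1}\setminus 0)\weq \pone^{\sma(d-1)}$ used to relate $\bpi_{d-1}^{\aone}(\A^{d-1}\setminus 0)$ to Theorem~\ref{thm:pinanminus0} all match the paper's approach.

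However, your concluding paragraph contradicts the decisive step of the paper's argument. You assert that the ambient cohomology group housing $o_2(\mathcal{E})$ ``generally does not'' vanish and that the essential work is to show the specific $k$-invariant lands in the indeterminacy coset. The paper's actual reduction is sharper: once the algebraically closed hypothesis is used to simplify the sheaf-cohomology of $\bpi_{d-1}^{\aone}(\A^{d-1}\setminus 0)$ (killing the Grothendieck--Witt contributions) and the indeterminacy from the choice of lift is quotiented out, the secondary obstruction is a coset in $\mathrm{Ch}^d(X)/(Sq^2 + c_1(\mathcal{E})\cup)\mathrm{Ch}^{d-1}(X)$, where $\mathrm{Ch}^i(X)=\mathrm{CH}^i(X)/2$. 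For $X$ smooth affine of dimension $d$ over an algebraically closed field, $\mathrm{CH}^d(X)$ is a divisible group, so $\mathrm{Ch}^d(X)=0$; the entire group containing the obstruction coset collapses, and there is nothing left to identify. You never invoke this divisibility of zero-cycles, which is the input that makes the theorem work for \emph{all} $d$ once the identification with the Chow-theoretic coset is available. The role of Theorem~\ref{thm:pinanminus0} and Conjecture~\ref{conj:unzero} is precisely to justify that identification (by controlling the unstable part of $\bpi_{d-1}^{\aone}(\A^{d-1}\setminus 0)$ relative to the stable computation of R\"ondigs--Spitzweck--\O stv\ae r); it is not a substitute for the divisibility vanishing. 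Your sketch for $d=3,4$ gestures in the right direction via stability and Grothendieck--Witt vanishing, but as written it reads as if one needs a case-by-case $k$-invariant analysis, when in fact the uniform mechanism is $\mathrm{Ch}^d(X)=0$.
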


To establish this result, one uses the assumptions that $X$ is smooth affine of Krull dimension $d$ and $k$ is algebraically closed in a strong way.  Indeed, these assertions can be leveraged to show that the primary obstruction, which is a priori an Euler class, actually coincides with the $(d-1)$st Chern class.  The secondary obstruction can be described by Theorem~\ref{thm:pinanminus0} and the form of the secondary obstruction is extremely similar to Liao's description in Section~\ref{subsec:corank}: it is a coset in $\mathrm{Ch}^{d}(X)/(Sq^2 + c_1(\mathcal{E}) \cup)\mathrm{Ch}^{d-1}(X)$ where $\mathrm{Ch}^i(X) = \mathrm{CH}^i(X)/2$.  Once more, the assumptions on $X$ guarantee that $\mathrm{Ch}^d(X)$ is trivial and thus the secondary obstruction is so as well.

\subsection{The enumeration problem}
If a vector bundle $E$ splits off a free rank $1$ summand, then another natural question is to enumerate the possible $E'$ that become isomorphic to $E$ after adding a free rank $1$ summand.  This problem may also be analyzed in homotopy theoretic terms as it amounts to enumerating the number of distinct lifts.  This kind of problem was studied in detail in topology by James and Thomas \cite{James65} and the same kind of analysis can be pursued in algebraic geometry.  

The history of the enumeration problem in algebraic geometry goes back to early days of algebraic K-theory.  Indeed, the Bass--Schanuel cancellation theorem \cite{Bass} solves the enumeration problem for bundles of negative corank.  Suslin's celebrated cancellation theorem \cite{Suslin77} solved the enumeration problem in corank $0$.  In all of these statements, ``cancellation" means that there is a unique lift.  On the other hand, Mohan Kumar observed \cite{MohanKumarstablyfree} that for bundles of corank $2$, uniqueness was no longer true in general.  Nevertheless, Suslin conjectured that the enumeration problem had a particularly nice solution in corank $1$.

\begin{conj}[Suslin's cancellation conjecture]
	If $k$ is an algebraically closed field, and $X$ is a smooth affine $k$-scheme of dimension $d\geq 2$.  If $\mathcal{E}$ and $\mathcal{E}'$ are corank $1$ bundles that become isomorphic after addition of a trivial rank $1$ summand, then $\mathcal{E}$ and $\mathcal{E}'$ are isomorphic.
\end{conj}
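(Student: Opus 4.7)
The plan is to analyze the enumeration problem via the $\aone$--Moore--Postnikov tower of the stabilization fibration $\op{BGL}_{d-1} \to \op{BGL}_d$ (whose $\aone$-fiber is $\aone^{d} \setminus 0$ by \eqref{eqn:stabilization} with $n=d-1$), in a spirit parallel to Theorem~\ref{thm:proofofmurthy} and to the James--Thomas enumeration program in classical topology.  Given $\mathcal{E},\mathcal{E}'$ of rank $d-1$ on $X$ with $\mathcal{E}\oplus \OO \cong \mathcal{E}'\oplus \OO$, Theorem~\ref{thm:affinerepresentabilityVB} translates the hypothesis into the statement that the two classifying maps $\xi,\xi' : X \to \op{BGL}_{d-1}$ become $\aone$-homotopic after postcomposition with stabilization; the goal is to promote this to an $\aone$-homotopy at the level of $\op{BGL}_{d-1}$.

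Second, a choice of $\aone$-homotopy between the stabilized maps produces a ``difference'' class valued in the $\aone$-homotopy classes of maps $X \to \aone^d\setminus 0$, well defined modulo the action induced by $\aone$-automorphisms of $\mathcal{E}\oplus \OO$.  Because $\aone^d\setminus 0$ is $\aone$-$(d-2)$-connected with first non-trivial homotopy sheaf $\bpi_{d-1}^{\aone}(\aone^d\setminus 0)\cong \K^{MW}_d$, the primary difference obstruction lives in a twisted Chow--Witt group $\mathrm{H}^{d-1}(X,\K^{MW}_d(\det\mathcal{E}))$.  Assuming it vanishes, a secondary difference obstruction lies in $\mathrm{H}^d(X,\bpi_d^{\aone}(\aone^d\setminus 0))$, modulo an indeterminacy coming from the choice of primary trivialization.

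Third, I would use the algebraic closedness of $k$ in a strong way, precisely as in the proof of Theorem~\ref{thm:proofofmurthy}.  The Witt-theoretic piece of $\K^{MW}_d$ is controlled by the fact that the Witt and $\mathbf{I}^j$ cohomology of $X$ has restricted behavior over algebraically closed fields, and the Milnor K-theoretic piece reduces to a Chow-type class which may be trivialized by combining divisibility of Chow groups of zero-cycles with a transfer/Bertini argument analogous to Murthy's.  For the secondary obstruction, Theorem~\ref{thm:pinanminus0} and the description of $\bpi_d^{\aone}(\aone^d\setminus 0)$ via the $\mathrm{KO}$-degree map and the motivic $J$-homomorphism realize this sheaf (for $d=3,4$) as an extension built from $\mathbf{GW}^d_{d+1}$ and a Milnor K-theoretic quotient $\K^M_{d+1}/24$; both contribute obstruction classes with Chow quotient in $\mathrm{Ch}^d(X)=\mathrm{CH}^d(X)/2$, which vanishes for $X$ smooth affine of dimension $d$ over an algebraically closed field.

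The hard part will be handling the indeterminacy at the secondary stage and the asymptotic behavior in $d$.  As in Theorem~\ref{thm:proofofmurthy}, varying the primary trivialization shifts the secondary obstruction by a twisted cohomology operation of $(Sq^2 + c_1(\mathcal{E})\cuppr)$-type, and one must show that this operation surjects onto the relevant quotient so that a well-chosen primary lift kills the secondary class; this requires careful bookkeeping of twists by $\det\mathcal{E}$ and an explicit formula for the operation.  For $d\geq 5$ a second obstacle appears: the sheaf $\mathbf{U}_{d+1}$ is not known to vanish, so the unstable secondary obstruction is only controlled by its stable quotient, and the argument would become conditional on Conjecture~\ref{conj:unzero} (mirroring the conditional clause in Theorem~\ref{thm:proofofmurthy}).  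Unconditionally, I would expect the method to resolve Suslin's cancellation conjecture in the range $d=3,4$, with the higher-dimensional case following from progress on the unstable Freudenthal problem for $\pone$-suspension.
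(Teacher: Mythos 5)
The statement you are trying to prove is recorded in the paper as an \emph{open conjecture} (Suslin's cancellation conjecture), not as a theorem: the paper offers no proof, only a summary of known partial results ($d=2$ trivial; $d=3$ established in \cite{AFBundle}; $\mathcal{E}$ trivial handled in \cite{Fasel10b} under an invertibility condition on $(d-1)!$; and, for general $d$, Du \cite{Du20} settled the oriented case \emph{conditionally} on the vanishing of $\mathrm{H}^d_{\Nis}(X,\piaone_d(\A^d\setminus 0))$, which would follow from Conjecture~\ref{conj:unzero}). Your obstruction-theoretic strategy --- comparing two lifts along the $\aone$--Moore--Postnikov tower of $\op{BGL}_{d-1}\to\op{BGL}_d$ in the spirit of James--Thomas --- is indeed the approach that underlies these partial results, so the scaffolding is right; but a strategy sketch of an open conjecture is not a proof.

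There are also concrete errors. Your concluding claim that the method yields $d=4$ unconditionally conflates the cancellation problem with the splitting problem. For cancellation in dimension $d$, the secondary obstruction lives in $\mathrm{H}^d(X,\piaone_d(\A^d\setminus 0))$, and the sheaf $\piaone_d(\A^d\setminus 0)$ is related by $S^1$-suspension to $\piaone_{d+1}((\pone)^{\sma d})$, whose unstable kernel is $\mathbf{U}_{d+1}$; you would therefore need $\mathbf{U}_5=0$, which is precisely the open $n=4$ case of Conjecture~\ref{conj:unzero}. Theorem~\ref{thm:pinanminus0} gives only $\mathbf{U}_4=0$; that is enough for cancellation at $d=3$ and for \emph{splitting} at $d=4$ (where the relevant secondary sheaf is $\piaone_{d-1}(\A^{d-1}\setminus 0)$, one index lower), but not for cancellation at $d=4$. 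Separately, you propose to kill the Milnor-K-theoretic piece of the primary difference class, which lives in $\mathrm{H}^{d-1}(X,\K^{MW}_d(\det\mathcal{E}))$, by divisibility of Chow groups of zero-cycles; that is a degree-$d$ vanishing mechanism and does not bear on degree $d-1$. The primary class must instead be shown to vanish by extracting consequences of the hypothesis $\mathcal{E}\oplus\OO\cong\mathcal{E}'\oplus\OO$ directly (for instance, by comparing Euler classes), not by generic vanishing over an algebraically closed field. Finally, your sketch does not engage the orientation restriction appearing in Du's theorem.
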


The above conjecture is trivial when $d = 2$.  It was established for $\mathcal{E}$ the trivial bundle of rank $d-1$ in \cite{Fasel10b} and $d=\mathrm{dim}(X)$ under the condition that $(d-1)!$ is invertible in $k$.  The above conjecture was also established for $d = 3$ in \cite{AFBundle} (assuming $2$ is invertible in $k$).  Paralleling the results of James-Thomas in topology (\cite{James65}), P. Du was able to prove in \cite{Du20} that Suslin's question has a positive answer for oriented vector bundles in case the cohomology group $\mathrm{H}^d_{\Nis}(X,\piaone_{d}(\A^{d}\setminus 0))$ vanishes. This vanishing statement would follow immediately from Conjecture~\ref{conj:unzero}.


\section{Vector bundles: non-affine varieties and algebraizability}
In this final section, we survey some joint work with M. J. Hopkins related to the classification of motivic vector bundles (see Definition~\ref{defn:motivicvectorbundle}), its relationship to the algebraizability question (see Question~\ref{quest:algebraizability}), and investigate the extent to which $\aone$-homotopy theory can be used to analyze vector bundles on projective varieties.

\subsection{Descent along a Jouanolou device}
If $X$ is a smooth algebraic $k$-variety, then there is always the map
\begin{equation}
	\label{eqn:Jdescent}
\mathrm{Vect}_r(X) \longrightarrow [X,\op{BGL}_r]_{\aone}
\end{equation}
from rank $r$ vector bundles to rank $r$ motivic vector bundles.  When $X$ is affine, Theorem~\ref{thm:affinerepresentabilityVB} guarantees that this map is a bijection, and examples show that the map fails to be an isomorphism outside of this case.  Nevertheless, it is very interesting to try to quantify the failure of the above map to be a bijection.

If $\pi: \tilde{X} \to X$ is a Jouanolou device for $X$, then it follows from the definitions that the map \eqref{eqn:Jdescent} coincides with $\pi^*: \mathrm{Vect}_r(X) \to \mathrm{Vect}_r(\tilde{X})$ under the bijection of Theorem~\ref{thm:affinerepresentabilityVB}.  The morphism $\pi$ is faithfully flat by construction, and therefore, vector bundles on $X$ are precisely vector bundles on $\tilde{X}$ equipped with a descent datum along $\pi$.  

Since $\pi: \tilde{X} \to X$ is an affine morphism, it follows that $\tilde{X} \times_X \tilde{X}$ is itself an affine scheme, and the two projections $p_1,p_2: \tilde{X} \times_X \tilde{X} \to \tilde{X}$ are $\aone$-weak equivalences.  Thus, pullbacks $p_1^*$ and $p_2^*$ are bijections on sets of isomorphism classes of vector bundles.  In fact, since the relative diagonal map splits the two projections, the two pullbacks actually coincide on isomorphism classes.  In descent-theoretic terms, these observations mean that any vector bundle $\mathscr{E}$ on $\tilde{X}$ can always be equipped with an isomorphism $p_1^* \mathscr{E} \isomt p_2^* \mathscr{E}$, i.e., a pre-descent datum.  Thus, the only obstruction to descending a vector bundle along $\pi$ is whether one may choose a pre-descent datum that actually satisfies the cocycle condition.  With this observation in mind, it seems natural to analyze the question of whether {\em every} vector bundle can be equipped with a descent datum along $\pi$.

\begin{question}\label{quest:Jdescent}
	If $X$ is a smooth $k$-variety and $\pi:\tilde{X}\to X$ is a Jouanolou device for $X$, then is the pull-back map 
	\[
	p^*:\mathrm{Vect}_n(X)\to \mathrm{Vect}_n(\tilde{X})
	\]
	surjective?
\end{question}


\begin{thm}[Asok, Fasel, Hopkins]
	Suppose $X$ is a smooth projective $k$-variety of dimension $d$.  If either i) $d \leq 2$ or ii) $k$ is algebraically closed and $d \leq 3$ , then Question~\ref{quest:Jdescent} admits a positive answer, i.e. every vector bundle on $\tilde{X}$ admits a descent datum relative to $\pi$.
\end{thm}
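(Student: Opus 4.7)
\medskip\noindent\emph{Plan.} First, we reformulate the problem. Combining Lemma~\ref{lem:JThomotopy} and Theorem~\ref{thm:affinerepresentabilityVB} with the fact that the Jouanolou device $\pi\colon \tilde X\to X$ is an $\aone$-weak equivalence, one has canonical identifications
\[
\mathrm{Vect}_n(\tilde X)\;\cong\;[\tilde X, \op{BGL}_n]_{\aone}\;\cong\;[X, \op{BGL}_n]_{\aone}
\]
under which the pullback $\pi^{*}$ becomes the natural comparison map $\alpha_n\colon \mathrm{Vect}_n(X)\to [X,\op{BGL}_n]_{\aone}$. Thus the theorem is equivalent to showing that every motivic rank $n$ vector bundle on $X$ is represented by a genuine one.

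The plan is obstruction-theoretic, via the $\aone$--Moore--Postnikov tower of $\op{BGL}_n$ described in Paragraph~\ref{entry:aoneMoorePostnikov}. Given a motivic class $\xi\in [X,\op{BGL}_n]_{\aone}$, its image at the first Postnikov stage is the determinant $\det\xi\in [X,\op{B}\Gm]_{\aone}=\mathrm{Pic}(X)$; since $\mathrm{Pic}$ is $\aone$-invariant this is already represented by a genuine line bundle $L$, and the honest bundle $\mathcal{E}_0:=L\oplus\mathcal{O}_X^{\oplus n-1}$ realises $\xi$ through the first stage. The fiber sequence $\op{BSL}_n\to\op{BGL}_n\to\op{B}\Gm$ then shows that any remaining discrepancy between $\alpha_n(\mathcal{E}_0)$ and $\xi$ is controlled by obstructions lying in $\mathrm{H}^{i}_{\op{Nis}}(X,\bpi_i^{\aone}(\op{BSL}_n))$ for $i\geq 2$. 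By Morel's results these coefficient sheaves are strictly $\aone$-invariant; in the stable range $i<n$ they coincide with the Quillen K-theory sheaves $\K_i^Q$, and the first non-stable case is controlled by Theorem~\ref{thm:firstnonstableBGLn}.

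In case (i), $\dim X\leq 2$ forces Nisnevich cohomology of $X$ with strictly $\aone$-invariant coefficients to vanish for $i>2$, leaving only a degree-$2$ obstruction. For $n\geq 3$ the relevant coefficient sheaf is $\K_2^M$, and Bloch's formula identifies the obstruction with a class in $\mathrm{CH}^2(X)$; on a smooth projective curve or surface any such class can be realised as the difference of second Chern classes of two genuine rank $n$ bundles on $X$ with determinant $L$, obtained from $\mathcal{E}_0$ by an elementary modification along a suitable $0$-cycle, so $\xi$ lies in the image of $\alpha_n$. The small-rank cases $n\leq 2$ require incorporating the Milnor--Witt refinement of Theorem~\ref{thm:firstnonstableBGLn}, but the Chow-theoretic realisability argument adapts. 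In case (ii) ($d\leq 3$, $k$ algebraically closed) there is in addition a degree-$3$ obstruction; the algebraic closedness of $k$ is used to reduce this to a class in $\mathrm{CH}^3(X)$ (or a closely related Chow--Witt-type group), which is then realised by explicit rank $n$ bundle constructions in the spirit of those underlying Theorem~\ref{thm:proofofmurthy}.

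The principal obstacle is the degree-$3$ obstruction in case (ii): identifying the relevant class precisely inside $\mathrm{H}^3(X,\bpi_3^{\aone}(\op{BSL}_n))$ and, for each such class, producing an actual rank $n$ vector bundle on $X$ whose motivic class differs from that of $\mathcal{E}_0$ by exactly that class. This demands both detailed control over the non-stable motivic homotopy sheaf $\bpi_3^{\aone}(\op{BSL}_n)$ and explicit geometric constructions of bundles with prescribed Chern-class data on smooth projective threefolds over algebraically closed fields; the algebraic closedness hypothesis is essential for converting the motivic obstruction into a Chow-theoretic statement whose realisability can be verified directly.
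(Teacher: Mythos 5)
The paper does not actually include a proof of this theorem (it is stated without proof in a survey, with the result attributed to Asok--Fasel--Hopkins), so I can only assess your proposal on its own terms, together with the hints the paper gives about the intended approach.

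Your overall reformulation is correct: the question is exactly whether $\alpha_n\colon\mathrm{Vect}_n(X)\to[X,\op{BGL}_n]_{\aone}$ is surjective, and the attack via the $\aone$--Moore--Postnikov tower is a reasonable choice. However, there are several genuine gaps, and the most important of these is decisive: you do not prove the degree-$3$ step. Your last paragraph simply names the difficulty --- ``identifying the relevant class\ldots and producing an actual rank $n$ bundle'' --- and says it ``demands'' control over $\bpi_3^{\aone}(\op{BSL}_n)$ and ``explicit geometric constructions'', without carrying either out. Since case (ii) with $d=3$ is precisely the part of the theorem that is not classical, this is not a proof but a restatement of what must be proved. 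The degree-$3$ obstruction sheaf is, depending on the rank, either $\K_3^Q$, the first nonstable sheaf of Theorem~\ref{thm:firstnonstableBGLn}, or $\bpi_2^{\aone}(\op{SL}_2)$; each requires its own argument and you address none of them.

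There are also gaps in the degree-$2$ step that you treat as settled. First, the inductive ``discrepancy'' argument ignores the indeterminacy inherent in Moore--Postnikov towers: lifts to stage $i$ are not unique, so the difference at stage $i+1$ is only a coset, and an elementary modification of $\mathcal{E}_i$ may disturb the agreement at stages $\leq i$. You must show the modification can be chosen compatibly with a fixed homotopy at the lower stage; this is exactly where the difference-cochain formalism (or the twisted principal fibration structure \eqref{eqn:twistedprincipalfibration}) does real work, and you do not invoke it. Second, the coefficient sheaves are twisted by the chosen determinant (as in Theorem~\ref{thm:morelsplitting}, where the Euler class lives in $\mathrm{H}^d(X,\K^{MW}_d(\det\mathcal{E}))$); your obstructions live in untwisted cohomology, which is correct only up to this omission. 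Third, the geometric realizability you assert --- that any class in $\mathrm{CH}^2(X)$, or in $\widetilde{\mathrm{CH}}{}^2(X)$ when $n=2$, is the relevant obstruction-class difference for some bundle with prescribed determinant obtained by elementary modification --- is plausible on a smooth projective surface but not free: over a general field in case~(i) the Chow--Witt group is genuinely larger than the Chow group, and you would need to show that the quadratic refinement of the relevant $0$-cycle can be prescribed as well. Finally, note that the paper's own discussion preceding the theorem frames it in descent-theoretic terms (pre-descent data always exist, the issue is the cocycle condition), which suggests the intended argument is organized rather differently from a pure Postnikov-tower induction; your approach may well converge on the same calculations, but you have not established that.
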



\subsection{Algebraizability I: obstructions}
\label{subsec:complex}
If $X$ is a smooth complex algebraic variety, then we considered the map
\[
\mathrm{Vect}_r(X) \longrightarrow \mathrm{Vect}_r^{top}(X)
\]
and posed the question of characterizing its image.  We observed that this map factors through the set of motivic vector bundles, so one necessary condition for a topological vector bundle to be algebraizable is that it admits a motivic lift.  In particular, this means that the Chern classes of the topological vector bundle in integral cohomology must lie in the image of the cycle class map.  It is natural to ask if algebraizability of Chern classes is sufficient to guarantee that a vector bundle admits a motivic left.  

In case $X$ is projective, this question has been for instance studied in \cite{Schwarzenberger61} where it is proved that any vector bundle with algebraic Chern classes is algebraizable if $\mathrm{dim}(X)=2$. In case of projective threefolds, positive results are given by Atiyah-Rees and B{\u{a}}nic{\u{a}}-Putinar respectively in \cite{Atiyah76} and \cite{Banica87}. If $X$ is affine, works of Swan-Murthy (\cite{Murthy76}) and Murthy-Mohan Kumar (\cite{Mohan82}) show that the answer to the question is positive if $X$ is of dimension $\leq 3$ as a consequence of the following statement: Given any pair $(\alpha_1,\alpha_2)\in \mathrm{CH}^1(X)\times \mathrm{CH}^2(X)$, there exists a vector bundle $\mathcal{E}$ on $X$ with $c_i(\mathcal{E})=\alpha_i$.  However, in dimension $4$ additional restrictions on Chern classes arise from the action of the motivic Steenrod algebra.

\begin{thm}[{\cite[Theorem 2]{AFHObstructions}}]
If $X$ is a smooth affine $4$-fold, then a pair $(c_1,c_2)\in  \mathrm{CH}^1(X)\times \mathrm{CH}^2(X)$ are Chern classes of a rank $2$ bundle on $X$ if and only if $c_1,c_2$ satisfy the additional condition $\mathrm{Sq}^2(c_2)+c_1 c_2=0$,  where  
\[
\mathrm{Sq}^2:\mathrm{CH}^2(X)\to \mathrm{CH}^3(X)/2
\]
is the Steenrod squaring operation, and $c_1c_2$ is the reduction modulo $2$ of the cup product.	
\end{thm}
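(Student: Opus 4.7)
The plan is to establish the two directions of the biconditional separately.

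\emph{Necessity.} By the splitting principle I would pull back the universal rank $2$ bundle along $\op{BGL}_1 \times \op{BGL}_1 \to \op{BGL}_2$; writing $a,b$ for the pullbacks of the two tautological first Chern classes, we have $c_1 = a+b$ and $c_2 = ab$. The motivic Bockstein $\mathrm{Sq}^1$ vanishes on classes coming from the integral Picard group, and the unstability relation gives $\mathrm{Sq}^2(a) = a^2$, $\mathrm{Sq}^2(b) = b^2$, so the Cartan formula yields $\mathrm{Sq}^2(ab) = a^2 b + a b^2 = (a+b)\cdot ab = c_1 c_2$. Since the pullback $\mathrm{Ch}^3(\op{BGL}_2) \hookrightarrow \mathrm{Ch}^3(\op{BGL}_1 \times \op{BGL}_1)^{\Sigmas_2}$ is injective, the identity $\mathrm{Sq}^2(c_2) + c_1 c_2 = 0$ holds universally on $\op{BGL}_2$ and hence on any $X$.

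\emph{Sufficiency.} Given $(c_1, c_2)$ satisfying the condition, I realize $c_1$ by a line bundle $L$ and view the problem as a lifting question along the natural map $\op{BGL}_2 \to K(\Gm, 1) \times K(\KMW_2, 2)$ for the classifying data of $(c_1, c_2)$ (with $c_2$ lifted to an appropriate Chow--Witt class, which is possible since $X$ is smooth affine). The $\aone$-Moore--Postnikov tower of Paragraph~\ref{entry:aoneMoorePostnikov} reduces the lifting question to a sequence of obstructions in Nisnevich cohomology of $X$ with coefficients in strictly $\aone$-invariant sheaves. Morel's computation of the first two non-vanishing $\aone$-homotopy sheaves -- $\piaone_1(\op{BGL}_2) = \Gm$ (the determinant) and $\piaone_2(\op{BGL}_2) = \KMW_2$, via $\op{SL}_2 \weq \A^2 \setminus 0$ and the first non-vanishing homotopy sheaf of $\A^2 \setminus 0$ -- shows that a lift exists through the first two stages, representing the prescribed $(c_1, c_2)$. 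By the dimension hypothesis $X$ has $\aone$-cohomological dimension $\leq 4$ (Example~\ref{ex:aonecohomologicaldimensionstrictbound}), so the only remaining obstruction lies in $H^4(X, \piaone_3(\op{BGL}_2))$.

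The main obstacle is to identify this remaining obstruction with the claimed expression. Since $\piaone_3(\op{BGL}_2) = \piaone_2(\op{GL}_2) = \piaone_2(\A^2 \setminus 0)$, the relevant coefficient sheaf is the first non-stable $\aone$-homotopy sheaf of the motivic sphere $S^{1,2}$, whose structure is accessible through the motivic $J$-homomorphism and the $\mathrm{KO}$-degree map of Section~\ref{subsec:KOdegree}. I would compute the secondary $k$-invariant of the tower and show that, composed with projection to mod-$2$ coefficients, it realizes the cohomology operation $\mathrm{Sq}^2(-) + c_1 \cdot -$. The standard approach is to verify this identification on a carefully chosen universal example (for instance, a low-dimensional Grassmannian where both the $k$-invariant and the claimed operation are computable) and then extend by naturality, paralleling the Liao--Thomas analysis recalled in Section~\ref{subsec:corank}. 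Once this identification is established, the hypothesis $\mathrm{Sq}^2(c_2) + c_1 c_2 = 0$ kills the final obstruction, yielding the desired rank $2$ bundle.
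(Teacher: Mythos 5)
Your necessity argument is correct and, up to presentation, matches the standard approach: the Cartan formula together with the unstability relation and vanishing of the Bockstein on integral classes gives the universal Wu-type identity $\mathrm{Sq}^2(c_2) = c_1c_2 + c_3$ in mod-$2$ Chow cohomology of $\mathrm{BGL}_r$; specializing to $r=2$ where $c_3=0$ yields the claim.

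The sufficiency argument, however, has a genuine gap with two linked sources. First, the parenthetical ``$c_2$ lifted to an appropriate Chow--Witt class, which is possible since $X$ is smooth affine'' is not automatic and is not justified: the short exact sequence $\mathbf{I}^3 \to \KMW_2 \to \KM_2$ puts the obstruction to lifting $c_2 \in \mathrm{CH}^2(X) = \mathrm{H}^2(X,\KM_2)$ to $\mathrm{H}^2(X,\KMW_2(\L))$ into the group $\mathrm{H}^3(X,\mathbf{I}^3(\L))$, and there is no general vanishing theorem for this group on a smooth affine $4$-fold. Second, and decisively, your proposed identification of the ``remaining obstruction'' has a degree mismatch. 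With your choice of target $K(\Gm,1)\times K(\KMW_2,2)$ the homotopy fiber is $\aone$-$2$-connected, so the first (and only) possible obstruction lies in $\mathrm{H}^4\bigl(X,\piaone_3(\mathrm{BGL}_2)\bigr)$. But $\mathrm{Sq}^2(c_2) + c_1c_2$ lives in $\mathrm{CH}^3(X)/2 = \mathrm{H}^3(X,\KM_3/2)$, a degree-$3$ group. No coefficient morphism can convert an $\mathrm{H}^4$-valued obstruction into an $\mathrm{H}^3$-class, so the identification you sketch cannot be carried out.

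The correct setup is to run the $\aone$--Moore--Postnikov tower against the \emph{Chow-valued} Chern class map $\mathrm{BGL}_2 \to K(\Gm,1)\times K(\KM_2,2)$, without prematurely lifting to Milnor--Witt coefficients. The homotopy fiber then has $\piaone_2 = \mathbf{I}^3$ (suitably twisted), so the primary obstruction sits in $\mathrm{H}^3(X,\mathbf{I}^3(\L))$, and it is the $k$-invariant at \emph{this} stage, composed with the reduction $\mathbf{I}^3 \to \mathbf{I}^3/\mathbf{I}^4 \cong \KM_3/2$, that realizes $\mathrm{Sq}^2(-) + c_1\cup(-)$. The subsequent obstruction in $\mathrm{H}^4\bigl(X,\piaone_3(\mathrm{BGL}_2)\bigr)$ -- which you conflate with the Steenrod class -- must be analyzed separately, and showing that it presents no further constraint requires the detailed structure of $\piaone_2(\A^2\setminus 0)$; this essential step is missing from your sketch.
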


\begin{rem}
This obstruction is sufficient to identify topological vector bundles on a smooth affine fourfold $X$ having algebraic Chern classes which are not algebraizable (\cite[Corollary 3.1.5]{AFHObstructions}). One example of such an $X$ is provided by the open complement in $\PP^1\times \PP^3$ of a suitable smooth hypersurface $Z$ of bidegree (3,4)). 
\end{rem}

%

\subsection{Algebraizability II: building motivic vector bundles}
The notion of a cellular space goes back to the work of Dror Farjoun.  By a cellular motivic space, we will mean a space that can be built out of the motivic spheres $S^{p,q}$ by formation of homotopy colimits.  It is straightforward to see inductively that ${\mathbb P}^n$ is cellular.  In the presence of cellularity assumptions, many obstructions to producing a motivic lift of a vector bundle vanish and this motivates the following conjecture.

\begin{conj}
	\label{conj:cellularmotivic}
	If $X$ is a smooth cellular $\cplx$-variety, then the map
	\[
	[X,Gr_r]_{\aone} \longrightarrow \mathrm{Vect}^{top}(X)
	\]
	is surjective (resp. bijective).
\end{conj}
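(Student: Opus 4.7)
My plan is obstruction theory driven by a cellular filtration of $X$. Fix a presentation $X \simeq \hocolim_\alpha X_\alpha$ in which $X_{\alpha+1}$ is built from $X_\alpha$ by attaching a motivic sphere $S^{p_\alpha,q_\alpha}$ along a map $\varphi_\alpha\colon S^{p_\alpha,q_\alpha} \to X_\alpha$. Since complex realization preserves homotopy colimits and sends $S^{p,q}$ to $S^{p+q}$, this same filtration realizes a CW structure on $X(\cplx)$. Given a topological bundle $\xi \in \mathrm{Vect}_r^{top}(X)$ classified by a map $X(\cplx) \to BU(r)$, the goal is to build compatible motivic maps $\tilde\xi_\alpha\colon X_\alpha \to Gr_r$ whose realizations agree with the restrictions of $\xi$, stage by stage.

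Suppose inductively that a lift $\tilde\xi_\alpha$ has been constructed with the correct realization. The obstruction to extending across the next cell is the homotopy class of $\tilde\xi_\alpha \circ \varphi_\alpha$ in $\bpi_{p_\alpha,q_\alpha}^{\aone}(Gr_r)(\Spec \cplx)$, and by hypothesis its image in $\pi_{p_\alpha+q_\alpha}(BU(r))$ vanishes because $\xi$ extends topologically. The core task is therefore to analyze the complex realization map
\[
\bpi_{p,q}^{\aone}(Gr_r)(\Spec \cplx) \longrightarrow \pi_{p+q}(BU(r))
\]
in the bidegrees that actually appear among the cells of $X$. In the stable range of simplicial degree, Morel identifies the source with $\K^Q_i(\cplx)$ for $i \leq r-1$, and Suslin's identification of algebraic $K$-theory of $\cplx$ with topological $K$-theory in positive degrees forces this map to be an isomorphism. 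Immediately outside the stable range, Theorem~\ref{thm:firstnonstableBGLn} identifies $\bpi^{\aone}_{r,r+1}(BGL_r)(\cplx)$ with $\Z/r!$ via realization, so the primary non-stable obstructions remain controllable.

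The hard part is the contribution of unstable bigraded homotopy sheaves of $Gr_r$ with $q>0$, where realization can have nontrivial kernel detected by Milnor--Witt or quadratic-form summands invisible topologically. The idea is to repackage the cell-by-cell obstructions into motivic cohomology classes on $X$ with coefficients in strictly $\aone$-invariant sheaves, via a cellular spectral sequence converging to $[X,Gr_r]_{\aone}$. For $X$ cellular, these cohomology groups are computable in terms of the coefficient sheaves $\mathbf{A}$ evaluated on $\Spec \cplx$ together with their contractions, reducing the question to arithmetic vanishing statements over the base field. Using the indeterminacy inherent to obstruction theory, one then tries to absorb the ``kernel of realization'' portion by modifying previously chosen lifts $\tilde\xi_\beta$ with $\beta < \alpha$. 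Controlling this indeterminacy coherently across all cells is where I expect the main technical difficulty to concentrate, and where current knowledge of $\bpi^{\aone}_{*,*}(Gr_r)$ falls short of a complete proof; a reasonable first target is the case when $r$ is large relative to the cellular dimension of $X$, where only the stable range intervenes.

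For the stronger bijectivity assertion, the same machinery applied to the difference of two motivic lifts with common realization places that difference in motivic cohomology of $X$ with coefficients in $\bpi_{*,*}^{\aone}(Gr_r)$ one cohomological degree lower, and cellularity again reduces uniqueness to vanishing on the base field. Thus surjectivity and bijectivity become successively stronger cohomological vanishing statements, and in both cases the principal obstacle is precisely the residual unstable motivic homotopy of $Gr_r$ that complex realization cannot detect.
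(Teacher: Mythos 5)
This is a \emph{conjecture} in the paper, not a proved result, so there is no proof for your proposal to match; what the paper offers is a brief sketch of a programme in the ``Wilson Space Hypothesis'' entry at the end of the section.  Your approach and the paper's sketch are genuinely different.  You take the cell filtration of $X$ at face value and run cell-by-cell obstruction theory, trying to compare $\bpi^{\aone}_{p,q}(Gr_r)(\Spec\cplx)$ with $\pi_{p+q}(BU(r))$ degree by degree and to absorb the discrepancy into the indeterminacy of earlier lifting choices.  The paper instead proposes to organize the entire comparison via a motivic unstable Adams--Novikov resolution of $\op{BGL}_r$: for cellular $X$ this yields a spectral sequence converging to a completion of $[X,Gr_r]_{\aone}$, and the Wilson Space Hypothesis (that the infinite $\pone$-loop spaces $\Omega^\infty_{\pone}\Sigma^n_{\pone}\op{MGL}$ have pure Tate motive) would force the motivic and topological spectral sequences to coincide, making the comparison structural rather than computation-by-computation.

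Beyond this difference of route, there is a concrete misstep in your argument.  You assert that ``Suslin's identification of algebraic $K$-theory of $\cplx$ with topological $K$-theory in positive degrees forces this map to be an isomorphism'' in the stable range.  Suslin rigidity identifies $K_*(\cplx;\Z/n)$ with $K^{top}_*(\mathrm{pt};\Z/n)$ for finite coefficients; integrally the groups $K_i(\cplx)$ for $i>0$ contain enormous uniquely divisible pieces (already $K_1(\cplx)=\cplx^\times$), so the realization map is very far from an isomorphism even stably.  Your argument would at best go through after profinite completion, which changes the statement; the paper's spectral-sequence proposal is explicitly formulated with a completion for exactly this reason.  Without that fix, the ``obstruction vanishes topologically, therefore it vanishes motivically'' step fails already in the stable range, before one ever reaches the unstable $\bpi^{\aone}_{*,*}$ contributions you flag as the main difficulty.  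You are correct that the indeterminacy bookkeeping is the remaining hard part of the cell-by-cell route; the paper's Wilson Space approach is designed to sidestep precisely that, but it is itself conditional on an unproven hypothesis, which is why the statement remains a conjecture.
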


\begin{rem}
	The conjecture holds for ${\mathbb P}^n$ for $n \leq 3$ (this follows, for example, from the results of Schwarzenberger and Atiyah--Rees mentioned above); in these cases, bijectivity holds.  For ${\mathbb P}^4$, the ``surjective'' formulation of Conjecture~\ref{conj:cellularmotivic} is known, but the ``bijective'' formulation is not. 
\end{rem} 	

We now analyze Conjecture~\ref{conj:cellularmotivic} for a class of ``interesting'' topological vector bundles on ${\mathbb P}^n$ introduced by E. Rees and L. Smith.  We briefly recall the construction of these topological vector bundles here.  By a classical result of Serre \cite[Proposition 11]{SerreClasses}, we know that if $p$ is a prime, then the $p$-primary component of $\pi_{4p-3}(S^3)$ is isomorphic to $\Z/p$, generated by the composite of a generator $\alpha_1$ of the $p$-primary component of $\pi_{2p}(S^3)$ and the $(2p-3)$rd suspension of itself; we will write $\alpha_1^2$ for this class.  

The map ${\mathbb P}^n \to S^{2n}$ that collapses ${\mathbb P}^{n-1}$ to a point determines a function
\[
[S^{2n-1},S^3] \cong [S^{2n},BSU(2)] \longrightarrow [{\mathbb P}^n,BSU(2)]
\]
Rees showed that the class $\alpha_1^2$ determines a non-trivial rank $2$ vector bundle $\xi_p \in [{\mathbb P}^{2p-1},BSU(2)]$; we will refer to this bundle as a {\em Rees bundle} \cite{Rees}.  By construction, $\xi_p$ is a non-trivial rank $2$ bundle with trivial Chern classes.

The motivation for Rees' construction originated from results of Grauert--Schneider \cite{GrauertSchneider}.  If the bundles $\xi_p$ were algebraizable, then the fact that they have trivial Chern classes would imply they were necessarily unstable by Barth's results on Chern classes of stable vector bundles \cite[Corollary 1 p. 127]{Barth} (here, stability means slope stability in the sense of Mumford).  Grauert and Schneider analyzed unstable rank $2$ vector bundles on projective space and they aimed to prove that such vector bundles were necessarily direct sums of line bundles; this assertion is now sometimes known as the Grauert--Schneider conjecture.  In view of the Grauert--Schneider conjecture, the bundles $\xi_p$ should not be algebraizable.  On the other hand, one of the motivations for Conjecture~\ref{conj:cellularmotivic} is the following result.  

\begin{thm}[{\cite[Theorem 2.2.16]{Asok20b}}]
	\label{thm:rees}
	For every prime number $p$, the bundle $\xi_p$ lifts to a class in $[\widetilde{{\mathbb P}^{2p-1}},Gr_2]_{\aone}$.
\end{thm}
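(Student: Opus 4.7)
The plan is to construct the desired class by carrying out Rees' topological argument inside $\mathrm{H}(k)$ and then pulling back to the Jouanolou device via Theorem~\ref{thm:affinerepresentabilityVB}. Topologically, $\xi_p$ is the composite
\[
\PP^{2p-1} \longrightarrow S^{4p-2} \stackrel{\alpha_1^2}{\longrightarrow} BSU(2),
\]
so I seek algebro-geometric analogs of each factor and then a matching of complex realizations.

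For the first factor, the cofiber sequence $\PP^{2p-2} \hookrightarrow \PP^{2p-1} \to \PP^{2p-1}/\PP^{2p-2}$ in $\mathrm{H}(k)$, combined with the standard $\aone$-weak equivalence $\PP^{n}/\PP^{n-1} \weq S^{n,n}$, produces a motivic collapse map $c:\PP^{2p-1} \to S^{2p-1,2p-1}$ whose complex realization is the classical collapse $\PP^{2p-1}(\cplx) \to S^{4p-2}$.

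For the second factor, I need a class $\widetilde{\alpha}_1^2 \in [S^{2p-1,2p-1},\op{BSL}_2]_{\aone}$ whose complex realization is $\alpha_1^2 \in \pi_{4p-2}(BSU(2))$. Using the $\aone$-weak equivalences $SL_2 \iso Q_3 \weq S^{1,2}$ from Example~\ref{ex:oddquadric} together with the loop-space adjunction, this amounts to producing an unstable class in $\piaone_{2p-2,2p-1}(S^{1,2})$ with the prescribed realization. I would build this in two steps. First, the computation of the motivic stable $1$-stem by R\"ondigs--Spitzweck--\O{}stv\ae{}r \cite{RSO}, together with the $\mathrm{KO}$-degree map and the motivic $J$-homomorphism discussed in Section~\ref{subsec:KOdegree}, produces a motivic lift of the classical $\alpha$-family whose square realizes to $\alpha_1^2$ in the appropriate stable bidegree. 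Second, I would lift this stable class to an unstable class on $S^{2p-1,2p-1}$ mapping to $\op{BSL}_2$, using that $\op{BSL}_2$ is $\aone$-simply connected and that the $\aone$-connectivity of $S^{1,2}$ places the stabilization map $\piaone_{2p-2,2p-1}(S^{1,2}) \to \piaone_{2p-2,2p-1}(\Omega^{\infty}_{\pone}\Sigma^{\infty}_{\pone}S^{1,2})$ inside a Freudenthal-type surjectivity range.

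Composing $c$ with $\widetilde{\alpha}_1^2$ yields a class in $[\PP^{2p-1},\op{BSL}_2]_{\aone}$, and its image in $[\PP^{2p-1},Gr_2]_{\aone}$ pulls back along the $\aone$-weak equivalence $\widetilde{\PP^{2p-1}} \to \PP^{2p-1}$ to the required element of $[\widetilde{\PP^{2p-1}},Gr_2]_{\aone}$. Naturality of complex realization applied to the two factors separately guarantees that the topological vector bundle attached to this class is the Rees bundle $\xi_p$. The main obstacle is the second step: while the stable motivic $\alpha$-family is accessible through \cite{RSO}, producing an \emph{unstable} lift in the precise bidegree $(2p-1,2p-1)$ and simultaneously controlling its complex realization requires a careful analysis of the stabilization map and of the motivic $J$-homomorphism; once this is in hand, the rest of the argument is a formal consequence of the cofiber sequence for $\PP^{2p-1}$ and affine representability.
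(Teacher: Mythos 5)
Your overall strategy — factor the Rees bundle through the motivic collapse $\PP^{2p-1}\to S^{2p-1,2p-1}$, produce a motivic lift of $\alpha_1^2$ in $[S^{2p-2,2p-1},SL_2]_{\aone}$, and then invoke affine representability to land in $[\widetilde{\PP^{2p-1}},Gr_2]_{\aone}$ — is the same skeleton the paper uses. The divergence, and the gap, is in how you propose to produce that lift.

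First, a misidentification of inputs: the class $\alpha_1$ lives (stably) in the $(2p-3)$-stem, not the $1$-stem. For $p$ odd, $\pi_1^s = \Z/2$ has no $p$-torsion, so the R\"ondigs--Spitzweck--\O{}stv\ae{}r computation of the motivic stable $1$-stem you cite cannot be the source of a motivic $\alpha_1$. What is relevant from Section~4.3 is the motivic $J$-homomorphism and the resulting image-of-$J$ summand, but that is a different statement. The paper instead lifts $\alpha_1$ via a motivic analogue of Serre's $p$-local splitting of compact Lie groups (\cite[Theorem~2]{AFHLocalization}), using the Borel--Hirzebruch / Suslin-matrix circle of ideas from Section~\ref{subsec:motivicBorelHirzebruch}; no stable stem computation in the sense you invoke is needed.

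Second, and more seriously, your argument does not engage with the \emph{weight} problem that the paper flags as the crux. A motivic lift of $\alpha_1$ produced by any of the natural constructions lands in a bidegree whose weight is not the one forced by the factorization through $[S^{2p-2,2p-1},SL_2]_{\aone}$; in the paper's words, ``the resulting lift has the wrong weight to lift to a group as above.'' A Freudenthal-type surjectivity statement for $S^{1,2}$, even if available in the relevant range, would at best move you between unstable and stable homotopy at a \emph{fixed} weight; it cannot correct a weight mismatch. The paper's fix is a weight-shifting mechanism that is available precisely because $\alpha_1^2$ is a torsion class — this is the genuinely nonformal step, and it has no counterpart in your proposal. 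Without it, the construction of the required element of $[S^{2p-2,2p-1},SL_2]_{\aone}$ does not go through.
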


\begin{rem}
This is established by constructing motivic homotopy classes lifting $\alpha_1$ and $\alpha_1^{2}$.  In our situation, the collapse map takes the form
\[
{\mathbb P}^n \longrightarrow S^{n,n}
\]
and the lift must come from an element of $[S^{n-1,n},SL_2]_{\aone}$.  The class $\alpha_1$ can be lifted using ideas related to those discussed in \ref{subsec:motivicBorelHirzebruch} in conjunction with a motivic version of Serre's classical $p$-local splitting of compact Lie groups \cite[Theorem 2]{AFHLocalization}, the resulting lift has the wrong weight to lift to a group as above.  Since the class $\alpha_1^2$ is torsion, we can employ a weight-shifting mechanism to fix this issue.  In this direction, there are host of other vector bundles that are analogous to the Rees bundles that one might investigate from this point of view, e.g., bundles that can be built out of Toda's unstable $\alpha$-family \cite{Toda}.  Likewise, even the surjectivity assertion in Conjecture~\ref{conj:cellularmotivic} is unknown for ${\mathbb P}^5$.
\end{rem}

\begin{entry}[The Wilson Space Hypothesis]
To close, we briefly sketch an approach to the resolution of Conjecture~\ref{conj:cellularmotivic} building on Mike Hopkins' Wilson Space Hypothesis.  The latter asserts that the Voevodsky motive of the $\pone$-infinite loop spaces $\Omega^{\infty}_{\pone} \Sigma^n_{\pone} \op{MGL}$ arising from algebraic cobordism are pure Tate (the space is ``homologically even''); this hypothesis is an algebro-geometric version of a result of Steve Wilson on the infinite loop spaces of the classical cobordism spectrum.  

The motivic version of the unstable Adams--Novikov resolution for $\op{BGL}_r$ yields a spectral sequence that, under the cellularity assumption on $X$ should converge to a (completion of) the set of rank $r$ motivic vector bundles on $X$.  The resulting spectral sequence can be compared to its topological counterpart and Wilson Space Hypothesis combined with the cellularity assumption on $X$ would imply that the two spectral sequences coincide.   
\end{entry}

\subsubsection*{Acknowledgements}
The authors would like to acknowledge the profound influence of M. Hopkins and F. Morel on their work. Many of the results presented here would not have been possible without their insights and vision. Both authors would also like to thank all their colleagues, coauthors and friends for their help and support. 

\subsubsection*{Funding}
AA was partially supported by NSF Awards DMS-1802060  and DMS-2101898.


{\begin{footnotesize}
\raggedright	
 \bibliographystyle{alpha}
 \bibliography{AsokFasel}
 \end{footnotesize}
}
\Addresses









\end{document}